\documentclass[reqno,11pt]{amsart}
\numberwithin{equation}{section}

\usepackage[utf8]{inputenc}
\usepackage[english]{babel}
\usepackage{amsmath}
\usepackage{amsfonts}
\usepackage{amssymb,latexsym}
\usepackage{amsthm}
\usepackage{accents}
\usepackage{cite}
\usepackage{graphicx}
\usepackage{color}
\usepackage{enumitem}
\usepackage{mathrsfs}
\usepackage{mathtools}
\usepackage{geometry}
\usepackage[normalem]{ulem}

\usepackage[colorlinks, citecolor=citegreen, linkcolor=refred]{hyperref}
\definecolor{citegreen}{rgb}{0,0.3,0}
\definecolor{refred}{rgb}{0.5,0,0}
\usepackage{cleveref}

\theoremstyle{plain}
\newtheorem{theorem}{Theorem}[section]

\newtheorem{prop}[theorem]{Proposition}
\newtheorem{lemma}[theorem]{Lemma}

\newtheorem{example}[theorem]{Example}

\newtheorem{corollary}[theorem]{Corollary}
\theoremstyle{remark}

\newcommand{\definedas}{\mathrel{\raise.095ex\hbox{\rm :}\mkern-5.2mu=}}
\newcommand{\asdefined}{\mathrel{=\mkern-5.2mu}\raise.095ex\hbox{\rm :}\;}

\title[Eventual Monotonicity of the Willmore Energy]
{Eventual Monotonicity of the Willmore Energy under Mean Curvature Flow}

\author[Y.~Chen]{Yuyue Chen}

\address{Yuyue Chen, School of Mathematical Sciences, Universiti Sains Malaysia,
11800 USM Penang, Malaysia}
\email{1634062227@qq.com}

\author[S.~Wu]{Shiying Wu}

\address{Shiying Wu, School of Mathematics, Guangxi University, Nanning, Guangxi 530004, PR China}
\email{sywu@st.gxu.edu.cn}

\begin{document}

\begin{abstract}
We prove a late-time monotonicity of the Willmore deficit  \(\int_{\Sigma}H^2\,d\mu-16\pi\) along the mean curvature flow of smooth closed embedded strictly convex surfaces in \(\mathbb R^3\),which refining the exponential convergence given by Huisken in \cite{huisken1984}. 
As a consequence, the Hawking mass is eventually non--decreasing along mean curvature flow. We also discuss totally umbilical solutions in three-dimensional space forms, where the monotonicity directions of the Willmore energy and the Hawking mass depend on the sign of the ambient sectional curvature.
\end{abstract}

\maketitle

\section{Introduction and Main Results}
Huisken's classical convergence theorem \cite{huisken1984} shows that a smooth closed embedded strictly convex hypersurface evolving by mean curvature flow becomes asymptotically spherical after the standard normalization.

We first formulate the mean curvature flow and its normalization. Let \( \Sigma_0\subset\mathbb R^3 \) be a smooth closed embedded strictly convex surface, and let \( \mathbf F:\Sigma_0\times[0,T)\longrightarrow\mathbb R^3 \) satisfy
\begin{equation}
\label{eq:intro-MCF}
\partial_t\mathbf F=-H\nu,
\qquad
\mathbf F(\cdot,0)=\mathbf F_0.
\end{equation}
We write \( \Sigma_t:=\mathbf F(\Sigma_0,t). \)

Huisken's theorem shows that the flow remains strictly convex and shrinks to a single point \(p\in\mathbb R^3\) at some finite extinction time \(T\). After translating \(p\) to the origin and applying Huisken's area-preserving normalization, we write
\[
\tilde\Sigma_{\tilde t}
=
\tilde{\mathbf F}(\Sigma_0,\tilde t),
\qquad
\tilde{\mathbf F}(\cdot,\tilde t)
=
\psi(t)\bigl(\mathbf F(\cdot,t)-p\bigr),
\]
where \(\psi(t)>0\) is the time-dependent scaling factor and the normalized time \(\tilde t\) is chosen so that \( \frac{d\tilde t}{dt}=\psi^2(t). \) Up to tangential reparametrization, the normalized immersion satisfies
\begin{equation}
\label{eq:intro-normalized-MCF}
\partial_{\tilde t}\tilde{\mathbf F}
=
-\tilde H\tilde\nu
+
\left(
\frac{1}{2|\tilde\Sigma_{\tilde t}|}
\int_{\tilde\Sigma_{\tilde t}}
\tilde H^2\,d\tilde\mu
\right)
\tilde{\mathbf F}.
\end{equation}
Related normalization mechanisms also arise in volume-preserving and constrained curvature flows, where suitable global terms are introduced to control the scale of the evolving hypersurfaces and to study their asymptotic convergence; see, for example, \cite{Huisken1987VolumePreserving,AndrewsWei2021,AndrewsChenWei2021}.

Under this normalization, the normalized surfaces converge smoothly to a round sphere \cite{huisken1984}. A recurring feature of Huisken's convergence argument is the exponential decay of geometric quantities measuring the deviation from roundness. In particular, for the normalized flow \(\tilde\Sigma_{\tilde t}\), his estimates give
\begin{equation}
\label{eq:intro-huisken-defect}
\int_{\tilde\Sigma_{\tilde t}}
\left(
|\tilde A|_{\tilde g}^2
-
\frac12\tilde H^2
\right)
\,d\tilde\mu
\le
Ce^{-\delta\tilde t},
\end{equation}
for some constants \(C<\infty\) and \(\delta>0\). Such exponential decay of geometric defects is not specific to mean curvature flow; analogous exponential decay estimates for curvature defects also arise in the normalized Ricci flow near constant-curvature metrics. This suggests the broader question of whether exponentially decaying geometric defects may acquire an eventual monotonicity as the flow approaches its limiting geometry. In the present paper, we address this question for the Willmore deficit under mean curvature flow and prove that the answer is affirmative.

For surfaces in \(\mathbb R^3\), the quantity in \eqref{eq:intro-huisken-defect} is directly related to the Willmore energy. Let \(\Sigma\subset\mathbb R^3\) be a smooth closed embedded strictly convex surface and set
\begin{equation}
\label{eq:intro-W-definition}
W(\Sigma)
:=
\int_\Sigma H^2\,d\mu,
\end{equation}
where \(H\) is the mean curvature and \(d\mu\) is the induced area element. For the evolving surfaces, we write \( W(t):=W(\Sigma_t),\, \tilde W(\tilde t) := W(\tilde\Sigma_{\tilde t}). \) Throughout the paper, we use the convention that the unit sphere \(S^2\subset\mathbb R^3\) has mean curvature \(H=2\).

If \(k_1\) and \(k_2\) are the principal curvatures, then \( |A|^2-\frac12H^2 = \frac12(k_1-k_2)^2. \) Moreover, by the Gauss equation and the Gauss--Bonnet theorem,
\begin{equation}
\label{eq:intro-W-defect-identity}
\begin{aligned}
W(\Sigma)-16\pi
&=
\int_\Sigma (k_1-k_2)^2\,d\mu\\
&=
2\int_\Sigma
\left(
|A|^2-\frac12H^2
\right)
\,d\mu.
\end{aligned}
\end{equation}
Consequently, in dimension two, Huisken's estimate \eqref{eq:intro-huisken-defect} implies
\begin{equation}
\label{eq:intro-huisken-W-decay}
\tilde W(\tilde t)-16\pi
\le
Ce^{-\delta\tilde t}.
\end{equation}

Moreover, the classical Willmore inequality states that
\begin{equation}
\label{eq:intro-willmore-inequality}
W(\Sigma)\ge16\pi,
\end{equation}
with equality if and only if \(\Sigma\) is a round sphere. We refer to \cite{willmore1965,Simon1993,KuwertSchatzle2001,KuwertSchatzle2002} for further background on the Willmore functional and related variational and flow problems.

Despite the exponential decay in \eqref{eq:intro-huisken-W-decay}, monotonicity is not apparent from the evolution equation itself. Indeed, along mean curvature flow, let \(g_t\), \(\nabla_t\), \(A_t\), and \(d\mu_t\) denote respectively the induced metric, Levi--Civita connection, second fundamental form, and area element on \(\Sigma_t\). The standard evolution equations for the induced geometry give the following identity; see, for example, \cite{huisken1984}.
\begin{equation}
\label{eq:intro-W-evolution}
\frac{d}{dt}W(t)
=
-2\int_{\Sigma_t}|\nabla_tH_t|_{g_t}^2\,d\mu_t
+
2\int_{\Sigma_t}
H_t^2
\left(
|A_t|_{g_t}^2-\frac12H_t^2
\right)d\mu_t.
\end{equation}
If \(k_{1,t}\) and \(k_{2,t}\) are the principal curvatures, then \( |A_t|_{g_t}^2-\frac12H_t^2 = \frac12(k_{1,t}-k_{2,t})^2, \) and hence
\begin{equation}
\label{eq:intro-W-evolution-principal}
\frac{d}{dt}W(t)
=
-2\int_{\Sigma_t}|\nabla_tH_t|_{g_t}^2\,d\mu_t
+
\int_{\Sigma_t}
H_t^2(k_{1,t}-k_{2,t})^2\,d\mu_t.
\end{equation}
Thus the evolution contains two terms of opposite sign: the first measures spatial oscillation of the mean curvature, whereas the second measures the failure of the surface to be umbilical. The issue is therefore not whether the Willmore deficit tends to zero, which is already known, but whether its approach to zero eventually has a definite direction.

Since the Willmore energy is invariant under translations and constant dilations of surfaces in \(\mathbb R^3\), the normalization gives \( W(t) = \tilde W(\tilde t(t)). \) Therefore
\[
\frac{d}{dt}W(t)
=
\frac{d\tilde t}{dt}
\frac{d}{d\tilde t}
\tilde W(\tilde t(t))
=
\psi^2(t)
\frac{d}{d\tilde t}
\tilde W(\tilde t(t)).
\]
In particular, the derivatives of the normalized and unnormalized Willmore energies have the same sign.

Our main result shows that the negative gradient term nevertheless dominates once the flow enters its asymptotically spherical regime.

\begin{theorem}
\label{exciting!}
The following statements hold.

\smallskip
\noindent\textbf{(i) Unnormalized mean curvature flow.}
Let \(\{\Sigma_t\}_{0\le t<T}\) be the smooth mean curvature flow starting from a smooth closed embedded strictly convex surface \(\Sigma_0\subset\mathbb R^3\). Then there exists \(t_0\in(0,T)\), depending on \(\Sigma_0\), such that
\begin{equation}
\label{eq:intro-eventual-W-monotonicity}
\frac{d}{dt}W(t)\le0
\qquad
\text{for all }t\in[t_0,T).
\end{equation}

\smallskip
\noindent\textbf{(ii) Huisken's area-preserving normalized mean curvature flow.}
Let$\{\tilde\Sigma_{\tilde t}\}_{\tilde t\ge0}$ be a smooth strictly convex solution of Huisken's area-preserving normalized mean curvature flow in \(\mathbb R^3\), after translating the extinction point to the origin. Let \(R>0\) be the radius of the limiting round sphere. Then there exists \(\tilde T_0<\infty\) such that
\[
\frac{d}{d\tilde t}\tilde W(\tilde t)\le0
\qquad
\text{for all }\tilde t\ge\tilde T_0.
\]
\end{theorem}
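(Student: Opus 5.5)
The plan is to work on the normalized flow, prove (ii), and then deduce (i) from the identity $\frac{d}{dt}W=\psi^2\frac{d}{d\tilde t}\tilde W$ with $\psi^2>0$. Write $\mathring A=A-\frac12Hg$, so that $|A|^2-\frac12H^2=|\mathring A|^2$. Since $W$ is scale invariant, the identity \eqref{eq:intro-W-evolution} holds verbatim for $\tilde W$ up to the factor $\psi^{-2}$. So it suffices to show that, for $\tilde t$ large,
\[
\int_{\tilde\Sigma_{\tilde t}}|\nabla\tilde H|^2\,d\tilde\mu\;\ge\;\int_{\tilde\Sigma_{\tilde t}}\tilde H^2|\mathring{\tilde A}|^2\,d\tilde\mu .
\]
Both sides are quadratic in the deviation from roundness. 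The inequality must therefore come from a spectral gap on the limiting sphere, with all errors of cubic order.

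\textbf{Step 1: pointwise algebra.} By Codazzi, $\operatorname{div}\mathring A=\frac12\nabla H$, and $\nabla A$ is a fully symmetric $3$-tensor. In dimension two it is determined by its four components $h_{111},h_{112},h_{122},h_{222}$, so $\nabla\mathring A$ is entirely controlled by $\nabla H$. This yields the pointwise identity $|\nabla\mathring A|^2=\frac12|\nabla H|^2$, which I will verify by direct computation in an orthonormal frame.

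\textbf{Step 2: a Poincaré inequality for $\mathring A$ on a nearly round surface.} I aim to prove
\[
\int|\nabla H|^2\,d\mu\ \ge\ (c_0-\varepsilon(\tilde t))\,\frac{1}{R^2}\int|\mathring A|^2\,d\mu,
\qquad c_0>4,\quad \varepsilon(\tilde t)\to0 .
\]
I would first try a Weitzenböck formula for traceless symmetric $2$-tensors on a surface, which relates $\int|\nabla T|^2$, $\int|\operatorname{div}T|^2$ and $\int K|T|^2$. Applied to $T=\mathring A$ together with Step 1, it gives $\int|\nabla H|^2$ as a positive multiple of $\int K|\mathring A|^2$. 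Huisken's smooth convergence gives $K\to R^{-2}$, $H\to 2/R$ and $|\mathring A|\to0$ uniformly in $C^k$. Hence the weight $K$ can be replaced by $R^{-2}$, and $H^2$ by $4/R^2$, at the cost of $o(1)\int|\mathring A|^2$.

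\textbf{Main obstacle: the value of the constant.} The needed constant is $c_0>4$, since the right-hand side is asymptotically $\frac{4}{R^2}\int|\mathring A|^2$. If the Weitzenböck constant turns out to be borderline, I would instead argue spectrally. Write $\tilde\Sigma_{\tilde t}$ as a normal graph $u$ over $\partial B_R$, and use convergence to kill the $\ell=0,1$ modes of $u$. Linearizing, $\delta H=-R^{-2}(\Delta+2)u$ and $\mathring A=\operatorname{Hess}u-\frac12(\Delta u)g$ up to quadratic errors. For a degree-$\ell$ harmonic on the unit sphere one computes $\int|\nabla\delta H|^2=(\lambda-2)^2\lambda\int u^2$ and $4\int|\mathring A|^2=(2\lambda^2-4\lambda)\int u^2$, where $\lambda=\ell(\ell+1)$. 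For $\ell=2$ this is $96$ versus $48$, and the ratio $\frac{(\lambda-2)^2\lambda}{2\lambda(\lambda-2)}=\frac{\lambda-2}{2}\ge2$ for all $\ell\ge2$. This is a uniform gap by a factor of $2$. The $\ell=1$ (translation) modes contribute nothing to either side at the linear level, because both quadratic forms vanish on them.

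\textbf{Step 3: absorbing the errors.} The remainder terms are bounded by $C\|u\|_{C^2}\bigl(\int|\nabla\delta H|^2+\int|\mathring A|^2\bigr)$. Since $\|u\|_{C^2}\to0$, the factor-$2$ gap absorbs them for $\tilde t\ge\tilde T_0$. This proves (ii).

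\textbf{Step 4: conclusion.} Statement (i) follows by taking $t_0$ with $\tilde t(t_0)=\tilde T_0$ and using $\psi^2>0$ in the chain rule.
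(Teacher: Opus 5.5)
\textbf{Verdict: there is a genuine gap in each of your two routes.} Your reduction to $\int|\tilde\nabla\tilde H|^2\ge\int\tilde H^2|\tilde A^\circ|^2$ is correct. So are the passage from (ii) to (i) and your spectral numbers (ratio $\frac{\lambda-2}{2}\ge2$, i.e.\ the paper's $\mathscr Q\ge8\mathscr R$). But neither route closes as written.

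\textbf{The Weitzenb\"ock route: Step~1 is false.} Write $h_{ijk}=\nabla_kh_{ij}$ in an orthonormal frame. Codazzi makes $\nabla A$ fully symmetric, which leaves four components, but $\nabla H$ only determines its trace; the two trace-free components are free. For example, $h_{122}=h_{212}=h_{221}=1$ with all other $h_{ijk}=0$ gives $|\nabla A^\circ|^2=|\nabla A|^2-\frac12|\nabla H|^2=\frac52$, while $\frac12|\nabla H|^2=\frac12$. The exact identity your Step~2 produces (Simons' identity integrated against $A^\circ$, using $\operatorname{div}A^\circ=\frac12\nabla H$) is
\[
\frac12\int_\Sigma|\nabla H|^2\,d\mu=\int_\Sigma|\nabla A^\circ|^2\,d\mu+2\int_\Sigma K|A^\circ|^2\,d\mu .
\]
Your Step~1 would therefore force $\int K|A^\circ|^2=0$ on every convex surface. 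Merely dropping $\int|\nabla A^\circ|^2\ge0$ gives only $\int|\nabla H|^2\ge4\int K|A^\circ|^2$, which is exactly the borderline constant you feared.

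The correct input is the sharp pointwise inequality $|\nabla A^\circ|^2\ge\frac14|\nabla H|^2$. This is the case $n=2$ of $|\nabla A|^2\ge\frac{3}{n+2}|\nabla H|^2$ in \cite{huisken1984}; it follows by minimizing over the trace-free part. Inserting it into the identity yields $\int|\nabla H|^2\ge8\int K|A^\circ|^2$, and hence
\[
\frac{d}{dt}W(t)\le2\int_{\Sigma_t}(H^2-8K)|A^\circ|^2\,d\mu=2\int_{\Sigma_t}(k_1^2-6k_1k_2+k_2^2)|A^\circ|^2\,d\mu .
\]
The right side is $\le0$ once $k_1/k_2\in[3-2\sqrt2,\,3+2\sqrt2]$ everywhere on $\Sigma_t$. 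This scale-invariant pinching holds for $t$ near $T$ by Huisken's theorem. With this correction your first route becomes a complete proof that needs no graph representation, gauge fixing, or spectral decomposition. It recovers the same constant $8$ as the paper, which is sharp on $\ell=2$ modes.

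\textbf{The spectral fallback: Step~3 is unjustified.} This route is essentially the paper's argument, but Step~3 is asserted exactly where the paper does its main work. Convergence makes the $\ell=0,1$ components of $u$ small; it does not remove them. The quadratic remainders contain terms such as $u_1^2$ and $|\bar\nabla u_1|^2$, and these are not bounded by $\|u\|_{C^2}$ times either $\ell\ge2$ quadratic form.

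A round sphere centered at a small $a\neq0$ already shows the bound in Step~3 fails. For it, $A^\circ=0$ and $\nabla H=0$. Yet its radial graph function over the origin-centered sphere has an $\ell=2$ component of size $|a|^2$, which is cancelled only by the $u_1$-remainders.

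The paper closes this gap as follows.
\begin{enumerate}
\item It re-centers and rescales via an implicit-function argument (Proposition~\ref{lem:gauge_fix_l01_strict}), so that the new graph function is exactly orthogonal to $\mathcal H_0\oplus\mathcal H_1$, with $C^3$ control.
\item Only then do the $\ell\ge2$ gaps (Proposition~\ref{prop:tracefree_hessian_gap} and Lemma~\ref{lem:elliptic_gap}) bound the remainders by $\varepsilon^2\mathscr R$ and $\varepsilon^2\mathscr Q$.
\item The scaling laws, together with $\lambda(\tilde t)\to1/R$, carry the inequality back to $\tilde\Sigma_{\tilde t}$.
\end{enumerate}
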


Theorem~\ref{exciting!} may be viewed as a dynamical refinement of the exponential convergence information in Huisken's asymptotic roundness theory. In dimension two, Huisken's estimate already shows that the Willmore deficit decays exponentially to zero, while Theorem~\ref{exciting!} supplies the additional one-sided information that this decay is eventually monotone. Thus the asymptotic approach to the spherical state acquires an eventual Lyapunov structure.

The key ingredient is a quantitative coercivity estimate near the round sphere, modulo the natural translation and dilation. More precisely, after translating and scaling a sufficiently small radial graph so that its spherical harmonic components of degrees \(l=0\) and \(l=1\) vanish, we prove that, for every \(\delta>0\),
\begin{equation}
\label{eq:intro-geometric-coercivity}
\int_\Gamma|\nabla_\Gamma H_\Gamma|_{g_\Gamma}^2\,d\mu_\Gamma
\ge
(4-\delta)
\int_\Gamma(k_{1,\Gamma}-k_{2,\Gamma})^2\,d\mu_\Gamma,
\end{equation}
provided that \(\Gamma\) is sufficiently close to the unit sphere. Combining this estimate with Huisken's convergence theorem and \eqref{eq:intro-W-evolution-principal} gives the eventual monotonicity in Theorem~\ref{exciting!}.

The main theorem also has a direct interpretation from the gradient-flow viewpoint. Mean curvature flow is the negative \(L^2\)-gradient flow of the area functional. If \( \mathcal A(t):=|\Sigma_t|, \) then \( \mathcal A'(t) = -\int_{\Sigma_t}H^2\,d\mu_t = -W(t). \) Thus the Willmore energy is precisely the instantaneous area dissipation rate. Theorem~\ref{exciting!} therefore shows that this dissipation rate itself becomes monotone in the asymptotically spherical regime. Equivalently, the area becomes eventually convex as a function of time.

\begin{corollary}[Eventual convexity of the area]
\label{cor:eventual-area-convexity}
Let \(\{\Sigma_t\}_{0\le t<T}\) be as in Theorem~\ref{exciting!}, and let \( \mathcal A(t):=|\Sigma_t|. \) Then there exists \(t_0\in(0,T)\) such that
\[
\mathcal A''(t)\ge0
\qquad
\text{for all }t\in[t_0,T).
\]
\end{corollary}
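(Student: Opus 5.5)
The corollary follows from Theorem~\ref{exciting!}(i) together with the first-variation formula for area along mean curvature flow. The only points that need care are the smoothness of \(\mathcal A\) in time and the identification \(\mathcal A'=-W\).

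First, I would record that on \([0,T)\) the flow is smooth, because the surfaces \(\Sigma_t\) are compact and Huisken's theorem gives smooth dependence on \(t\) before the extinction time. Hence \(\mathcal A(t)=\int_{\Sigma_t}d\mu_t\) is \(C^\infty\) on \([0,T)\). The standard evolution of the area element under \(\partial_t\mathbf F=-H\nu\) is
\[
\partial_t\,d\mu_t=-H_t^2\,d\mu_t .
\]
Differentiating under the integral sign (legitimate since \(\Sigma_0\) is a fixed compact parameter domain) gives \(\mathcal A'(t)=-\int_{\Sigma_t}H_t^2\,d\mu_t=-W(t)\), as noted before the statement.

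Second, \(W(t)\) is therefore differentiable, and differentiating once more gives \(\mathcal A''(t)=-\frac{d}{dt}W(t)\). The derivative of \(W\) is given explicitly by \eqref{eq:intro-W-evolution-principal}. I would then invoke Theorem~\ref{exciting!}(i) to obtain \(t_0\in(0,T)\) such that \(\frac{d}{dt}W(t)\le0\) for all \(t\in[t_0,T)\). It follows that \(\mathcal A''(t)\ge0\) on the same interval, with the same \(t_0\) depending only on \(\Sigma_0\).

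There is no genuine obstacle here: all the analytic difficulty (the coercivity estimate \eqref{eq:intro-geometric-coercivity} near the sphere and its combination with Huisken's convergence) is contained in Theorem~\ref{exciting!}. The corollary is just the observation that \(W\) is minus the derivative of the area. The only step I would write out with some care is the justification of \(\partial_t\,d\mu_t=-H^2\,d\mu_t\). This follows from \(\partial_t g_{ij}=-2Hh_{ij}\), so that \(\partial_t\sqrt{\det g}=\tfrac12 g^{ij}\partial_t g_{ij}\sqrt{\det g}=-H^2\sqrt{\det g}\), where \(g^{ij}h_{ij}=H\).
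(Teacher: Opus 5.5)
Your proposal is correct and follows the same route as the paper, which obtains the corollary directly from the identity \(\mathcal A'(t)=-W(t)\) and Theorem~\ref{exciting!}(i), so that \(\mathcal A''=-W'\ge0\) on \([t_0,T)\) with the same \(t_0\). Your derivation of \(\partial_t\,d\mu_t=-H^2\,d\mu_t\) from \(\partial_t g_{ij}=-2Hh_{ij}\) is consistent with the paper's sign conventions and supplies the one step the paper leaves implicit.
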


The Willmore energy is also closely connected with the Hawking mass. For a smooth closed surface \(\Sigma\) in a three-dimensional Riemannian manifold,
\begin{equation}
\label{eq:intro-Hawking-mass}
m_H(\Sigma)
:=
\sqrt{\frac{|\Sigma|}{16\pi}}
\left(
1-\frac{1}{16\pi}\int_\Sigma H^2\,d\mu
\right),
\qquad
|\Sigma|:=\int_\Sigma d\mu.
\end{equation}
The Hawking mass was introduced by Hawking \cite{Hawking1968} and, in the time-symmetric Riemannian setting, is closely related to the Geroch mass \cite{Geroch1973,HuiskenIlmanen2001}. Relations among the Hawking mass, Willmore-type curvature integrals, and global geometric inequalities also appear in the work of Bray and Miao \cite{BrayMiao2008}, where the capacity of a surface is estimated in terms of its area and Willmore functional, with applications to the Hawking mass.

As a consequence of Theorem~\ref{exciting!}, we obtain eventual monotonicity of the Hawking mass along the contracting Euclidean mean curvature flow. This monotonicity is the same in nature as the classical Geroch monotonicity.

\begin{corollary}
\label{cor:hawking-mass-euclidean-late-time-intro}
Let \(\{\Sigma_t\}_{0\le t<T}\) be as in Theorem~\ref{exciting!}. Then there exists \(t_0\in(0,T)\) such that
\begin{equation}
\label{eq:intro-Hawking-monotonicity}
\frac{d}{dt}m_H(\Sigma_t)\ge0
\qquad
\text{for all }t\in[t_0,T).
\end{equation}
\end{corollary}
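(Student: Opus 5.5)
We obtain the corollary by differentiating the Hawking mass directly along the flow. Then we use the area identity \(\mathcal A'(t)=-W(t)\), the Willmore inequality \eqref{eq:intro-willmore-inequality}, and the eventual monotonicity \(W'(t)\le0\) from Theorem~\ref{exciting!}(i). Write \(\mathcal A(t)=|\Sigma_t|\) and
\[
m_H(\Sigma_t)=\frac{1}{\sqrt{16\pi}}\,\mathcal A(t)^{1/2}\Bigl(1-\frac{W(t)}{16\pi}\Bigr).
\]
Both \(\mathcal A\) and \(W\) are smooth in \(t\) on \([0,T)\), because the flow is smooth there. Hence \(m_H(\Sigma_t)\) is differentiable, and the product rule gives
\[
\sqrt{16\pi}\,\frac{d}{dt}m_H(\Sigma_t)
=\frac{\mathcal A'(t)}{2\mathcal A(t)^{1/2}}\Bigl(1-\frac{W(t)}{16\pi}\Bigr)
-\frac{\mathcal A(t)^{1/2}}{16\pi}\,W'(t).
\]

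Next we substitute \(\mathcal A'(t)=-W(t)\), which follows from \(\partial_t d\mu=-H^2d\mu\) under \eqref{eq:intro-MCF}. The first term becomes
\[
\frac{W(t)}{2\mathcal A(t)^{1/2}}\Bigl(\frac{W(t)}{16\pi}-1\Bigr).
\]
Here \(W(t)>0\) and \(\mathcal A(t)>0\). The Willmore inequality \eqref{eq:intro-willmore-inequality} gives \(W(t)\ge16\pi\), so this term is nonnegative for every \(t\in[0,T)\). This is exactly the Geroch-type mechanism alluded to before the statement.

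The second term is \(-\frac{\mathcal A(t)^{1/2}}{16\pi}W'(t)\). Take \(t_0\in(0,T)\) to be the time furnished by Theorem~\ref{exciting!}(i). Then \(W'(t)\le0\) for \(t\in[t_0,T)\), so this term is also nonnegative there. Summing the two terms gives \(\frac{d}{dt}m_H(\Sigma_t)\ge0\) on \([t_0,T)\), which is \eqref{eq:intro-Hawking-monotonicity}.

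All the difficulty lies in Theorem~\ref{exciting!}, which we assume. The only point needing care here is that the sign of the area-variation term is unconditional. It relies only on the Willmore inequality and does not require late times, so the same \(t_0\) as in Theorem~\ref{exciting!} suffices.
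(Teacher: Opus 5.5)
Your proposal is correct and matches the paper's proof essentially line by line. You differentiate $m_H$ and substitute $\mathcal A'(t)=-W(t)$. The area term is then nonnegative at all times by the Willmore inequality $W\ge16\pi$, and the $W'$ term is nonnegative for $t\ge t_0$ by Theorem~\ref{exciting!}(i), so the same $t_0$ works.
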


We also examine totally umbilical mean curvature flow in three-dimensional space forms to illustrate the role of the ambient curvature. In this setting, the Willmore energy satisfies \(W'(t)=4cW(t)\), so its monotonicity is determined by the sign of the sectional curvature \(c\).

The rest of the paper is organized as follows. Section~2 introduces the notation and geometric preliminaries. Section~3 develops the spectral and geometric estimates for small radial graphs, and Section~4 removes the low spherical harmonic modes by translation and scaling. Section~5 proves the main coercive estimate and records the relevant transformation laws. The main theorem is proved in Section~6. Finally, Section~7 discusses totally umbilical examples in space forms and the Hawking-mass consequences.

\section{Preliminaries}
\label{section preliminaries}

We collect here the notation and geometric conventions used throughout the paper. These conventions are chosen so that the later spherical expansions, curvature commutators, and scaling arguments can be compared without further changes of sign or normalization.

Throughout the paper, all surfaces are assumed to be smooth, closed, and oriented unless otherwise stated. We omit the subscripts \(t\) and \(\tilde t\) whenever no confusion can arise. \(C\) denotes a positive constant depending only on the standard round metric \(\bar g\) on \(S^2\), whose value may change from line to line.

Let \( \Sigma\subset\mathbb R^3 \) be an embedded surface with outer unit normal \(\nu\). We write
\[
g=\{g_{ij}\},
\qquad
A=\{h_{ij}\},
\qquad
H=g^{ij}h_{ij},
\qquad
|A|_g^2=g^{ij}g^{kl}h_{ik}h_{jl},
\]
and denote by \( \nabla,\, \Delta,\, d\mu \) the Levi--Civita connection, Laplace--Beltrami operator, and area element of \(\Sigma\), respectively.

For the normalized surface \(\tilde\Sigma\), we use the corresponding notation
\[
\tilde g=\{\tilde g_{ij}\},
\qquad
\tilde A=\{\tilde h_{ij}\},
\qquad
\tilde H=\tilde g^{ij}\tilde h_{ij},
\qquad
|\tilde A|_{\tilde g}^2
=
\tilde g^{ij}\tilde g^{kl}\tilde h_{ik}\tilde h_{jl},
\]
together with \( \tilde\nabla,\, \tilde\Delta,\, \tilde\nu,\, d\tilde\mu. \)

By $(\cdot,\cdot)$, we denote the standard Euclidean inner product on $\mathbb R^3$. Let \(x=(x^1,x^2)\) be local coordinates on $\Sigma_0$. For \( \mathbf F:\Sigma_0\times[0,T)\longrightarrow\mathbb R^3, \) the induced metric and the second fundamental form on $\Sigma_t$ are
\[
g_{ij}(x,t)
=
\left(
\frac{\partial \mathbf F(x,t)}{\partial x^i},
\frac{\partial \mathbf F(x,t)}{\partial x^j}
\right),
\qquad
h_{ij}(x,t)
=
-\left(
\nu(x,t),
\frac{\partial^2 \mathbf F(x,t)}
{\partial x^i\partial x^j}
\right).
\]

Set
\[
\partial_i:=\frac{\partial}{\partial x^i},
\qquad
\mathbf F_i:=\partial_i\mathbf F,
\qquad
\mathbf F_{ij}:=\partial_i\partial_j\mathbf F.
\]

For a smooth function \(f\) on \(\Sigma\),
\[
\nabla f
=
g^{ij}\frac{\partial f}{\partial x^j}\partial_i,
\qquad
(\nabla f)^i
=
g^{ij}\frac{\partial f}{\partial x^j}.
\]
For a vector field \( X=X^i\partial_i, \) its divergence is
\[
\operatorname{div}_\Sigma X
=
\nabla_iX^i
=
\frac{\partial X^i}{\partial x^i}
+
\Gamma^i_{ik}X^k
=
\frac{1}{\sqrt{\det g}}
\frac{\partial}{\partial x^i}
\left(
\sqrt{\det g}\,X^i
\right).
\]
Accordingly,
\[
\Delta f
:=
\operatorname{div}_\Sigma(\nabla f)
=
g^{ij}
\left(
\frac{\partial^2f}{\partial x^i\partial x^j}
-
\Gamma^k_{ij}
\frac{\partial f}{\partial x^k}
\right),
\]
or equivalently,
\[
\Delta f
=
\frac{1}{\sqrt{\det g}}
\frac{\partial}{\partial x^i}
\left(
\sqrt{\det g}\,
g^{ij}
\frac{\partial f}{\partial x^j}
\right).
\]

We denote by \( A^\circ := A-\frac{H}{2}g \) the trace--free second fundamental form. The mean curvature is \( H=g^{ij}h_{ij}=k_1+k_2, \) where \(k_1,k_2\) are the principal curvatures. With the above sign convention, \( \Delta\mathbf F=-H\nu, \) and the mean curvature flow is \( \partial_t\mathbf F = \Delta\mathbf F. \)

Throughout the paper, \(\bar g\) denotes the standard round metric on \(S^2\), and \(d\omega\) denotes its area element. We write \( \bar\nabla,\, \bar\Delta,\, \bar\nabla^2 \) for its Levi--Civita connection, Laplace--Beltrami operator, and Hessian, respectively. For a symmetric \(2\)-tensor \(T\) on \(S^2\), we write \( T^{\circ_{\bar g}} := T-\frac12(\operatorname{tr}_{\bar g}T)\bar g \) for its trace--free part with respect to \(\bar g\). For a radial graph \( \Gamma = \{(1+v(\omega))\omega:\omega\in S^2\}, \) we denote the corresponding geometric quantities by
\[
g_\Gamma,\qquad
\nabla_\Gamma,\qquad
H_\Gamma,\qquad
A^\circ_\Gamma,\qquad
k_{1,\Gamma},\qquad
k_{2,\Gamma},\qquad
d\mu_\Gamma.
\]
Whenever such quantities are compared with tensors on \((S^2,\bar g)\), they are understood to be pulled back by the radial parameterization \( F:S^2\to\Gamma, \, F(\omega)=(1+v(\omega))\omega. \)

We use \(\langle\cdot,\cdot\rangle\) to denote the relevant inner product when the underlying space is clear from the context. For the standard geometric conventions used below, see, for example, \cite{Lee2018Riemannian}. For vector fields \(X,Y,Z\) on \(\Sigma\), our curvature convention is
\[
R(X,Y)Z
=
\nabla_Y\nabla_XZ
-
\nabla_X\nabla_YZ
+
\nabla_{[X,Y]}Z.
\]
In local coordinates,
\[
R(\partial_i,\partial_j)\partial_k
=
R_{ijk}{}^l\partial_l,
\qquad
R_{ijkl}
=
\left\langle
R(\partial_i,\partial_j)\partial_k,
\partial_l
\right\rangle
=
g_{lm}R_{ijk}{}^m.
\]
Thus, for vector fields \( X=X^i\partial_i, \, Y=Y^j\partial_j, \, Z=Z^k\partial_k, \) we have
\[
R(X,Y)Z
=
X^iY^jZ^kR_{ijk}{}^l\partial_l.
\]

The Gauss equation on $\Sigma$ is
\[
R_{ijkl}
=
h_{ik}h_{jl}
-
h_{il}h_{jk}.
\]

With the above curvature convention, covariant derivatives satisfy
\[
(\nabla_i\nabla_j-\nabla_j\nabla_i)X^h
=
-R_{ijkl}g^{hl}X^k
=
-R_{ijk}{}^hX^k
\]
for a vector field $X$, while for a covariant \(1\)-form \(\beta\),
\[
(\nabla_i\nabla_j-\nabla_j\nabla_i)\beta_k
=
R_{ijkm}g^{ml}\beta_l
=
R_{ijk}{}^l\beta_l.
\]

For a \((1,2)\)-tensor \(T\), we write
\[
\nabla T
=
\{\nabla_lT^i{}_{jk}\},
\qquad
(\Delta T)^i{}_{jk}
=
g^{mn}\nabla_m\nabla_nT^i{}_{jk}.
\]

Finally, all norms on \(S^2\) are taken with respect to \(\bar g\). For a tensor field \(T\), \( \|T\|_{L^2(S^2)}^2 := \int_{S^2}|T|_{\bar g}^2\,d\omega, \) and for \(u\in H^2(S^2)\),
\[
\|u\|_{H^2(S^2)}^2
:=
\|u\|_{L^2(S^2)}^2
+
\|\bar\nabla u\|_{L^2(S^2)}^2
+
\|\bar\nabla^2u\|_{L^2(S^2)}^2.
\]

\section{Spectral Estimates for Higher Spherical Harmonic Modes}
In this section we collect the spectral and geometric estimates needed for the main coercivity argument. After the $l=0$ and $l=1$ spherical harmonic modes are removed, the operator $\bar\nabla(\bar\Delta+2)$ has a spectral gap on $S^2$. We combine this fact with the expansions of the mean curvature and the trace--free second fundamental form for small radial graphs defined on unit sphere.

Unless stated otherwise, let \( F:S^2\to\mathbb R^3, \, F(\omega)=(1+u(\omega))\omega, \) where \( u\in C^3(S^2), \, \|u\|_{C^3(S^2)}\le\varepsilon, \, \varepsilon\in(0,\frac14). \) Set
\[
\Sigma:=F(S^2),
\qquad
\hat g:=F^*(g_{\mathbb R^3}|_{\Sigma}),
\qquad
d\hat\mu:=F^*(d\mu_{\Sigma}).
\]
We denote by \(\hat\nabla\) the Levi--Civita connection of \(\hat g\), and by \( \hat A=\{\hat h_{ij}\}:=F^*A_\Sigma \) the pull-back of the second fundamental form of \(\Sigma\). We denote by $H_{\Sigma}$ the mean curvature of $\Sigma$ with respect to the outward unit normal.

\subsection{Spherical harmonic estimates}
We begin with the estimate of linear part of $|\bar\nabla H_{\Sigma}|_{\bar g}^2$ and $(k_{1,\Sigma}-k_{2,\Sigma})^2$. We use the standard spherical harmonic decomposition of the Laplace--Beltrami operator on the round sphere; see, for example, \cite[Chapter II, \S5]{Chavel1984}.

Let $\{Y_{l,m}\}$ be an $L^2(S^2)$--orthonormal basis of spherical harmonics, satisfying
\[
-\bar\Delta Y_{l,m}=\lambda_lY_{l,m},
\qquad
\lambda_l=l(l+1),
\]
and denote $\mathcal H_l$ the eigenspace of $-\bar\Delta$ with eigenvalue $\lambda_l$. Let $P_l$ be the $L^2(S^2)$-orthogonal projection onto $\mathcal H_l$. Define
\[
\mathscr Q(f)
:=
\int_{S^2}|\bar\nabla(\bar\Delta+2)f|_{\bar g}^2\,d\omega,
\qquad
\mathscr R(f)
:=
\int_{S^2}|(\bar\nabla^2f)^{\circ_{\bar g}}|_{\bar g}^2\,d\omega.
\]

\begin{prop}[Diagonalization of $\mathscr Q$ and $\mathscr R$]
\label{lem:QR-diagonal}
Let
\[
u=\sum_{l\ge0}u_l,
\qquad
u_l:=P_lu=\sum_{m=-l}^l u_{l,m}Y_{l,m}.
\]
Then
\[
\mathscr Q(u)=\sum_{l\ge0}\mathscr Q(u_l),
\qquad
\mathscr R(u)=\sum_{l\ge0}\mathscr R(u_l).
\]
And we have
\begin{align}
\mathscr Q(u)
&=
\sum_{l\ge0}\sum_{m=-l}^l
\lambda_l(\lambda_l-2)^2|u_{l,m}|^2,
\label{eq:Q_spec_compressed}\\
\mathscr R(u)
&=
\sum_{l\ge0}\sum_{m=-l}^l
\frac12\lambda_l(\lambda_l-2)|u_{l,m}|^2.
\label{eq:R_spec}
\end{align}
\end{prop}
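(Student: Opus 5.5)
We prove this by integrating by parts on the closed manifold \((S^2,\bar g)\), reducing both quadratic forms to expressions in \(\bar\Delta\) alone. The spherical harmonic decomposition then diagonalizes them automatically.

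\textbf{The form \(\mathscr Q\).} Set \(w=(\bar\Delta+2)f\). Integrating by parts gives
\[
\mathscr Q(f)=\int_{S^2}|\bar\nabla w|^2_{\bar g}\,d\omega=-\int_{S^2}w\,\bar\Delta w\,d\omega .
\]
Hence \(\mathscr Q(f)=\langle -\bar\Delta(\bar\Delta+2)f,(\bar\Delta+2)f\rangle_{L^2}\). The operators \(\bar\Delta\) and \(\bar\Delta+2\) are self-adjoint and preserve each \(\mathcal H_l\), and the spaces \(\mathcal H_l\) are mutually \(L^2\)-orthogonal. Since \(-\bar\Delta(\bar\Delta+2)\) is self-adjoint and commutes with \(P_l\), the cross terms \(\langle -\bar\Delta(\bar\Delta+2)u_l,(\bar\Delta+2)u_{l'}\rangle\) vanish for \(l\ne l'\). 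This gives \(\mathscr Q(u)=\sum_l\mathscr Q(u_l)\). On \(\mathcal H_l\) we have \((\bar\Delta+2)u_l=-(\lambda_l-2)u_l\) and \(-\bar\Delta\) acts by \(\lambda_l\). By orthonormality of the \(Y_{l,m}\),
\[
\mathscr Q(u_l)=\lambda_l(\lambda_l-2)^2\sum_m|u_{l,m}|^2,
\]
which is \eqref{eq:Q_spec_compressed}.

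\textbf{The form \(\mathscr R\).} Since \((\bar\nabla^2f)^{\circ_{\bar g}}=\bar\nabla^2f-\frac12\bar\Delta f\,\bar g\) and \(|\bar g|^2=2\) in dimension two,
\[
|(\bar\nabla^2f)^{\circ}|^2=|\bar\nabla^2f|^2-\tfrac12(\bar\Delta f)^2 .
\]
We then apply the integrated Bochner formula on \(S^2\), where \(\mathrm{Ric}=\bar g\) (Gaussian curvature \(1\)):
\[
\int_{S^2}|\bar\nabla^2f|^2\,d\omega=\int_{S^2}(\bar\Delta f)^2\,d\omega-\int_{S^2}|\bar\nabla f|^2\,d\omega .
\]
This follows by integrating \(\bar\nabla_i\bar\nabla_jf\,\bar\nabla_i\bar\nabla_jf\) by parts twice and commuting derivatives via the Ricci identity \(\bar\nabla_j\bar\nabla_i\bar\nabla_jf=\bar\nabla_i\bar\Delta f+\mathrm{Ric}_{ij}\bar\nabla_jf\). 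Sign care is needed against the paper's convention \(R(X,Y)Z=\nabla_Y\nabla_X Z-\dots\), but the result is convention-independent since \(\mathrm{Ric}>0\) on the round sphere. Combining the two identities,
\[
\mathscr R(f)=\tfrac12\int(\bar\Delta f)^2-\int|\bar\nabla f|^2=\big\langle \tfrac12\bar\Delta^2 f+\bar\Delta f,\,f\big\rangle_{L^2} .
\]
This is again the quadratic form of a polynomial in \(\bar\Delta\), so it diagonalizes over the \(\mathcal H_l\) by the same orthogonality argument. On \(\mathcal H_l\) it equals \(\frac12\lambda_l^2-\lambda_l=\frac12\lambda_l(\lambda_l-2)\), which gives \eqref{eq:R_spec}.

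\textbf{Regularity.} The identities are derived for smooth \(f\). They extend to \(u\in C^3\) (indeed to \(H^3\) for \(\mathscr Q\) and \(H^2\) for \(\mathscr R\)) by density. Both sides are continuous in these norms, and the spectral sums converge because \(\sum_l\lambda_l^3\|u_l\|^2\simeq\|u\|_{H^3}^2\).

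The only step requiring real care is the Bochner identity, specifically getting the sign of the curvature term right. As a check, the result must give \(\mathscr R=0\) on \(\mathcal H_1\): the restrictions of linear functions satisfy \(\bar\nabla^2x=-x\bar g\), which is pure trace. It must also give \(\mathscr R=0\) on constants, and both checks pin the sign.
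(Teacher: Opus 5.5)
Your proposal is correct and follows essentially the paper's route. You write $\mathscr Q$ as the quadratic form of a polynomial in $\bar\Delta$, and likewise $\mathscr R(f)=\langle \tfrac12\bar\Delta(\bar\Delta+2)f,f\rangle_{L^2}$ via the trace-free identity and Bochner with $\mathrm{Ric}=\bar g$; you then diagonalize using orthogonality of the $\mathcal H_l$ and extend by density. Your Bochner computation for $\mathscr R$ supplies the details the paper dismisses as ``similar,'' and integrating by parts for smooth $f$ before passing to the limit is slightly cleaner than the paper's direct $H^3$ pairing.
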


\begin{proof}
We only compute $\mathscr{Q}$. The argument of $\mathscr{R}$ is similar.

For any $f\in H^3(S^2)$, integration by parts gives

$$
\mathscr{Q}(f)
=\int_{S^2} \left\langle \bar\nabla(\bar\Delta +2)f,\bar\nabla(\bar\Delta +2)f\right\rangle d\omega
= \left\langle (\bar\Delta +2)f,\,-\bar\Delta (\bar\Delta +2)f\right\rangle_{L^2(S^2)} d\omega.
$$

Equivalently,

$$
\mathscr{Q}(f)=\langle f,\,A f\rangle_{L^2(S^2)},
\qquad
A:=(\bar\Delta +2)(-\bar\Delta )(\bar\Delta +2),
$$

where $A$ is a nonnegative self-adjoint elliptic operator of order $6$ on $S^2$.

If $\varphi\in\mathcal H_l$, then $\bar\Delta \varphi=-\lambda_l\varphi$ and hence

\begin{equation}
\label{invariant.property.of.operator.A}
A\varphi
=(\bar\Delta +2)(-\bar\Delta )(\bar\Delta +2)\varphi
=\lambda_l(\lambda_l-2)^2\,\varphi \in \mathcal H_l.
\end{equation}
Thus $\mathcal H_l$ is $A$-invariant and $A$ acts as a scalar multiple on it.
Let $l\neq l'$ and take $\varphi\in\mathcal H_l$, $\psi\in\mathcal H_{l'}$.
Using self-adjointness and $\mathcal H_l\perp \mathcal H_{l'}$ in $L^2(S^2)$, we can get

$$
\langle \varphi, A\psi\rangle_{L^2(S^2)}
=\langle A\varphi,\psi\rangle_{L^2(S^2)}
=\lambda_l(\lambda_l-2)^2 \langle \varphi,\psi\rangle_{L^2(S^2)}=0.
$$
Therefore, for partial sums $u^{(N)}:=\sum\limits_{l=0}^N u_l$ we have
\begin{equation}
\label{Q_u^n}
\mathscr{Q}(u^{(N)})
=\left\langle \sum_{l\le N}u_l,A\sum_{l'\le N}u_{l'}\right\rangle_{L^2(S^2)}
=\sum_{l\le N}\langle u_l,Au_l\rangle_{L^2(S^2)}
=\sum_{l\le N}\mathscr{Q}(u_l).
\end{equation}

Since $u\in H^3(S^2)$, we have $u^{(N)}\to u$ in $H^3(S^2)$ and hence $(\bar\Delta +2)u^{(N)}\to (\bar\Delta +2)u$ in $H^1(S^2)$.
Thus $\bar\nabla(\bar\Delta +2)u^{(N)}\to \bar\nabla(\bar\Delta +2)u$ in $L^2(S^2)$, which implies $\mathscr{Q}(u^{(N)})\to \mathscr{Q}(u)$.
Taking limits in \eqref{Q_u^n} yields

$$
\mathscr{Q}(u)=\lim_{N\to\infty}\mathscr{Q}(u^{(N)})=\lim_{N\to\infty}\sum_{l\le N}\mathscr{Q}(u_l)
=\sum_{l\ge0}\mathscr{Q}(u_l).
$$

\eqref{invariant.property.of.operator.A} implies that
\(
\mathscr Q(Y_{l,m})
=\lambda_l(\lambda_l-2)^2\|Y_{l,m}\|_{L^2(S^2)}^2.
\)
Then the orthogonality of $Y_{l,m}$ gives \eqref{eq:Q_spec_compressed}.

\end{proof}

\begin{prop}[Trace--free Hessian spectral gap on $S^2$]
\label{prop:tracefree_hessian_gap}
Let $u\in H^2(S^2)$, and denote by $u_{\ge2}$ its projection onto the
spherical harmonics of degrees $l\ge2$. Then, for $k=0,1,2$,
\begin{equation}
\label{eq:tracefree_hessian_gap}
\|\bar\nabla^ku_{\ge2}\|_{L^2(S^2)}^2
\le
C
\int_{S^2}
|(\bar\nabla^2u)^{\circ_{\bar g}}|_{\bar g}^2\,d\omega.
\end{equation}
\end{prop}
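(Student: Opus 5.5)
The plan is to reduce everything to the spectral formula \eqref{eq:R_spec} of Proposition~\ref{lem:QR-diagonal} and compare coefficients mode by mode. The key observation is that the trace--free Hessian annihilates the modes $l=0,1$: indeed $\tfrac12\lambda_l(\lambda_l-2)=0$ for $l=0,1$. Concretely, constants have zero Hessian, and for $l=1$ the restrictions of linear functions satisfy $\bar\nabla^2 Y=-Y\bar g$, which is pure trace. Hence
\[
\mathscr R(u)=\mathscr R(u_{\ge2})=\sum_{l\ge2}\sum_{m}\tfrac12\lambda_l(\lambda_l-2)|u_{l,m}|^2 .
\]
Since Proposition~\ref{lem:QR-diagonal} was proved for smoother functions, I would first establish the inequality for $u\in C^\infty$ (or finite sums of spherical harmonics). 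I would then pass to general $u\in H^2$ by density: both sides are continuous in $H^2$, and $P_{\ge2}$ is bounded on $H^2$. The same truncation argument as in Proposition~\ref{lem:QR-diagonal} works verbatim for $\mathscr R$, since it only needs $H^2$ convergence.

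Next I would compute the left-hand sides spectrally for $v=u_{\ge2}$:
\[
\|v\|_{L^2}^2=\sum_{l\ge2}\sum_m|u_{l,m}|^2,\qquad
\|\bar\nabla v\|_{L^2}^2=\sum_{l\ge2}\sum_m\lambda_l|u_{l,m}|^2 .
\]
For the Hessian term I would use the Bochner identity on $(S^2,\bar g)$ with $\mathrm{Ric}=\bar g$:
\[
\|\bar\nabla^2 v\|_{L^2}^2=\|\bar\Delta v\|_{L^2}^2-\|\bar\nabla v\|_{L^2}^2=\sum_{l\ge2}\sum_m(\lambda_l^2-\lambda_l)|u_{l,m}|^2 .
\]
As a cross-check, this is consistent with $|\bar\nabla^2v|^2=|(\bar\nabla^2v)^{\circ}|^2+\tfrac12(\bar\Delta v)^2$, which gives $\tfrac12\lambda(\lambda-2)+\tfrac12\lambda^2=\lambda^2-\lambda$.

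The proof then reduces to bounding the ratios of coefficients for $l\ge2$. The ratios are
\[
\frac{1}{\tfrac12\lambda_l(\lambda_l-2)},\qquad \frac{\lambda_l}{\tfrac12\lambda_l(\lambda_l-2)}=\frac{2}{\lambda_l-2},\qquad \frac{\lambda_l^2-\lambda_l}{\tfrac12\lambda_l(\lambda_l-2)}=\frac{2(\lambda_l-1)}{\lambda_l-2}.
\]
Since $\lambda_l\ge6$, these are bounded by $1/12$, $1/2$, and $5/2$ respectively. So $C=5/2$ works for $k=0,1,2$ simultaneously, and summing over modes gives \eqref{eq:tracefree_hessian_gap}.

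The only mildly delicate step is the Bochner identity for $\|\bar\nabla^2v\|^2$, which requires an integration by parts together with the commutator $\bar\nabla_i\bar\Delta v=\bar\Delta\bar\nabla_i v-\mathrm{Ric}_{ij}\bar\nabla^jv$ and the paper's sign conventions. This can be sidestepped entirely by using the pointwise decomposition $|\bar\nabla^2v|^2=|(\bar\nabla^2v)^\circ|^2+\tfrac12(\bar\Delta v)^2$. Combined with $\|\bar\Delta v\|^2=\sum\lambda_l^2|u_{l,m}|^2\le\tfrac{\lambda_l}{\tfrac12(\lambda_l-2)}\cdot\tfrac12\lambda_l(\lambda_l-2)|u_{l,m}|^2$ mode by mode, that ratio is at most $3$. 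This gives the $k=2$ case directly, with $C=1+\tfrac12\cdot 3=\tfrac52$.
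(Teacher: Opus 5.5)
Your proposal is correct and follows the paper's proof: expand $u_{\ge2}$ in spherical harmonics, compute $\|u_{\ge2}\|^2$, $\|\bar\nabla u_{\ge2}\|^2$ and (via Bochner) $\|\bar\nabla^2u_{\ge2}\|^2$ mode by mode, use the diagonal formula for $\mathscr R$ (whose $l=0,1$ coefficients vanish), and compare coefficients using $\lambda_l\ge6$. Your explicit constant $C=5/2$, the alternative $k=2$ argument via $|\bar\nabla^2v|^2=|(\bar\nabla^2v)^{\circ_{\bar g}}|^2+\tfrac12(\bar\Delta v)^2$, and the density step from smooth functions to $H^2$ are sound additions that the paper leaves implicit.
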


\begin{proof}
Writing
\(
u_{\ge2}
=\sum_{l\ge2}\sum_{m=-l}^l u_{l,m}Y_{l,m},
\)
we have
\[
\|u_{\ge2}\|_{L^2}^2
=\sum_{l\ge2,m}|u_{l,m}|^2,
\qquad
\|\bar\nabla u_{\ge2}\|_{L^2}^2
=\sum_{l\ge2,m}\lambda_l|u_{l,m}|^2,
\]
and, by the Bochner identity,
\[
\|\bar\nabla^2u_{\ge2}\|_{L^2}^2
=\sum_{l\ge2,m}\lambda_l(\lambda_l-1)|u_{l,m}|^2.
\]
On the other hand, Proposition~\ref{lem:QR-diagonal} gives
\[
\int_{S^2}
|(\bar\nabla^2u)^{\circ_{\bar g}}|_{\bar g}^2\,d\omega
=\sum_{l\ge2,m}\frac12
\lambda_l(\lambda_l-2)|u_{l,m}|^2.
\]
Since $\lambda_l\ge6$ for $l\ge2$, the quantities $1$, $\lambda_l$, and $\lambda_l(\lambda_l-1)$ are bounded by a universal multiple of $\frac12\lambda_l(\lambda_l-2)$. This proves \eqref{eq:tracefree_hessian_gap}.
\end{proof}

\begin{lemma}[Elliptic spectral gap on $S^2$]
\label{lem:elliptic_gap}
Let $u\in H^3(S^2)$, and denote by $u_{\ge2}$ its projection onto modes $l\ge2$. Then, for $k=1,2,3$,
\begin{equation}
\label{eq:elliptic_gap}
\|\bar\nabla^ku_{\ge2}\|_{L^2(S^2)}^2
\le
C
\|\bar\nabla(\bar\Delta+2)u\|_{L^2(S^2)}^2.
\end{equation}
\end{lemma}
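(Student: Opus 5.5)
We argue spectrally, in the same way as in Proposition~\ref{prop:tracefree_hessian_gap}, using the expansion
\[
u=\sum_{l\ge0}\sum_{m=-l}^l u_{l,m}Y_{l,m}.
\]

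\textbf{Step 1: the right-hand side.} By Proposition~\ref{lem:QR-diagonal}, the right-hand side of \eqref{eq:elliptic_gap} is
\[
\mathscr Q(u)=\sum_{l,m}\lambda_l(\lambda_l-2)^2|u_{l,m}|^2 .
\]
For $l=0$ we have $\lambda_0=0$, and for $l=1$ we have $\lambda_1-2=0$. So only the modes $l\ge2$ contribute, and $\mathscr Q(u)=\mathscr Q(u_{\ge2})$. Dropping the low modes from the right-hand side therefore costs nothing.

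\textbf{Step 2: the left-hand side.} We write each quantity $\|\bar\nabla^ku_{\ge2}\|_{L^2}^2$ spectrally.

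For $k=1$, integration by parts gives $\sum_{l\ge2,m}\lambda_l|u_{l,m}|^2$.

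For $k=2$, the Bochner identity on $S^2$, where $\mathrm{Ric}=\bar g$, gives $\sum_{l\ge2,m}\lambda_l(\lambda_l-1)|u_{l,m}|^2$. This is exactly as in the proof of Proposition~\ref{prop:tracefree_hessian_gap}.

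For $k=3$, we commute derivatives using the constant curvature of $S^2$. We expect
\[
\|\bar\nabla^3 f\|_{L^2}^2=\|\bar\nabla\bar\Delta f\|_{L^2}^2+a\|\bar\Delta f\|_{L^2}^2+b\|\bar\nabla f\|_{L^2}^2
\]
for universal constants $a,b$. Then $\|\bar\nabla^3u_{\ge2}\|_{L^2}^2=\sum_{l\ge2,m}p(\lambda_l)|u_{l,m}|^2$ with $p$ a cubic polynomial of leading term $\lambda^3$.

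It is not necessary to find $a$ and $b$ exactly. An upper bound $p(\lambda)\le C(\lambda^3+1)$ suffices, and it follows from the Ricci identity. Schematically,
\[
\bar\nabla^3f=\bar\nabla^3f \text{ with indices permuted} + \bar R*\bar\nabla f.
\]
Integrating by parts then gives $\|\bar\nabla^3f\|^2\le C\bigl(\|\bar\nabla\bar\Delta f\|^2+\|\bar\nabla^2f\|^2+\|\bar\nabla f\|^2\bigr)$.

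\textbf{Step 3: comparison of multipliers.} For $l\ge2$ we have $\lambda_l\ge6$, hence $\lambda_l-2\ge\tfrac23\lambda_l$. Therefore
\[
\lambda_l(\lambda_l-2)^2\ge\tfrac49\lambda_l^3 .
\]
Each of $\lambda_l$, $\lambda_l(\lambda_l-1)$ and $p(\lambda_l)\le C\lambda_l^3$ is then bounded by a universal multiple of $\lambda_l(\lambda_l-2)^2$. Summing over $l\ge2$ and $m$ gives \eqref{eq:elliptic_gap}.

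\textbf{Justification for $u\in H^3$.} The series are justified either by the partial-sum argument of Proposition~\ref{lem:QR-diagonal} or by density of smooth functions.

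\textbf{Main obstacle.} The only step needing care is $k=3$: controlling $\|\bar\nabla^3 u_{\ge2}\|^2$ by $\|\bar\nabla\bar\Delta u_{\ge2}\|^2$ plus lower-order terms via the curvature commutators. The lower-order terms are then absorbed using the $k=1,2$ bounds.
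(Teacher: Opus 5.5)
Your proposal is correct and follows essentially the same route as the paper. You expand in spherical harmonics, observe that the $l=0,1$ modes drop out of $\mathscr Q(u)$, and compare multipliers using $\lambda_l\ge6$, so that $\lambda_l(\lambda_l-2)^2\ge\frac49\lambda_l^3$ dominates the $H^3$ weights.

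The only difference is at $k=3$. The paper simply cites the bound $\|u_{\ge2}\|_{H^3}^2\le C\sum_{l\ge2,m}(1+\lambda_l)^3|u_{l,m}|^2$, whereas you justify it via the Ricci identity. Your ansatz is in fact exact on the unit sphere with $a=-3$ and $b=4$, which gives $\|\bar\nabla^3u_{\ge2}\|_{L^2}^2=\sum_{l\ge2,m}\lambda_l(\lambda_l^2-3\lambda_l+4)|u_{l,m}|^2$.
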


\begin{proof}
By Proposition~\ref{lem:QR-diagonal},
\[
\|\bar\nabla(\bar\Delta+2)u\|_{L^2(S^2)}^2
=
\sum_{l\ge2,m}
\lambda_l(\lambda_l-2)^2|u_{l,m}|^2.
\]
By a direct computation using the spherical harmonic expansion (see also, e.g., \cite[Section~2.3, p.~3022]{DambrineLamboley2019}), we obtain

$$
\|u_{\ge2}\|_{H^3(S^2)}^2
\le
C
\sum_{l\ge2,m}(1+\lambda_l)^3|u_{l,m}|^2.
$$

Since $\lambda_l\ge6$ for $l\ge2$, there eixsts a constant $C$ independent of $l$ such that
\[
(1+\lambda_l)^3
\le
C\lambda_l(\lambda_l-2)^2.
\]
Therefore
\[
\|u_{\ge2}\|_{H^3(S^2)}^2
\le
C
\|\bar\nabla(\bar\Delta+2)u\|_{L^2(S^2)}^2,
\]
which implies \eqref{eq:elliptic_gap}.
\end{proof}

\subsection{Metric and mean curvature estimates}
The spectral estimates above are formulated on the fixed round sphere. We next transfer them to nearby surfaces by comparing the induced geometry of a small radial graph with the background metric and by identifying the linearized mean-curvature operator.

\begin{lemma}[Metric, measure, and gradient comparison]
\label{lem:pullback_L2_comparison}
Assume \( \|u\|_{C^3(S^2)}\le\varepsilon<\frac14. \) Then
\begin{equation}
\label{eq:metric_equiv}
\left(1-(2+2\varepsilon)\varepsilon\right)\bar g
\le
\hat g
\le
\left(1+(2+2\varepsilon)\varepsilon\right)\bar g,
\end{equation}
and
\begin{equation}
\label{eq:measure_equiv}
\left(1-(2+2\varepsilon)\varepsilon\right)d\omega
\le
d\hat\mu
\le
\left(1+(2+2\varepsilon)\varepsilon\right)d\omega.
\end{equation}
Moreover, for $f\in C^1(S^2)$, viewed as a function on $\Sigma$ by $f_\Sigma=f\circ F^{-1}$,
\begin{equation}
\label{eq:L2_grad_comp}
\begin{aligned}
&\left(
1-
\frac{4\varepsilon+4\varepsilon^2}
{1+2\varepsilon+2\varepsilon^2}
\right)
\int_{S^2}|\bar\nabla f|_{\bar g}^2\,d\omega
\\
&\qquad\le
\int_\Sigma|\nabla_\Sigma f_\Sigma|_g^2\,d\mu_\Sigma
\\
&\qquad\le
\left(
1+
\frac{4\varepsilon+4\varepsilon^2}
{1-2\varepsilon-2\varepsilon^2}
\right)
\int_{S^2}|\bar\nabla f|_{\bar g}^2\,d\omega.
\end{aligned}
\end{equation}
\end{lemma}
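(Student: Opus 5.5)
The plan is to compute the pulled-back metric explicitly and reduce everything to pointwise eigenvalue bounds for $\hat g$ relative to $\bar g$. Differentiating $F(\omega)=(1+u(\omega))\omega$ along a tangent vector $X\in T_\omega S^2$ gives $dF(X)=(1+u)X+(Xu)\,\omega$. Since $\omega\perp X$, this yields
\[
\hat g=(1+u)^2\bar g+du\otimes du .
\]
Fix $\omega$ and choose a $\bar g$-orthonormal frame $e_1,e_2$ with $e_1\parallel\bar\nabla u$ (any frame if $\bar\nabla u=0$). In this frame $\hat g$ is diagonal, with eigenvalues $(1+u)^2+|\bar\nabla u|^2$ and $(1+u)^2$.

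Using $|u|\le\varepsilon$ and $|\bar\nabla u|\le\varepsilon$, both eigenvalues lie between $(1-\varepsilon)^2$ and $(1+\varepsilon)^2+\varepsilon^2=1+2\varepsilon+2\varepsilon^2$. The lower bound satisfies $(1-\varepsilon)^2=1-2\varepsilon+\varepsilon^2\ge 1-2\varepsilon-2\varepsilon^2$. This gives \eqref{eq:metric_equiv}. For the area element, $d\hat\mu=\sqrt{\det_{\bar g}\hat g}\,d\omega=(1+u)\sqrt{(1+u)^2+|\bar\nabla u|^2}\,d\omega$. This lies between $(1-\varepsilon)^2$ and $(1+\varepsilon)\sqrt{(1+\varepsilon)^2+\varepsilon^2}\le 1+2\varepsilon+2\varepsilon^2$, where the last step reduces to the elementary inequality $(1+\varepsilon)^2((1+\varepsilon)^2+\varepsilon^2)\le(1+2\varepsilon+2\varepsilon^2)^2$. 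Expanding, the left side is $(1+\varepsilon)^4+\varepsilon^2(1+\varepsilon)^2$ and the right side is $((1+\varepsilon)^2+\varepsilon^2)^2$. Their difference is $\varepsilon^2((1+\varepsilon)^2+\varepsilon^2)\ge0$, so the inequality holds. This proves \eqref{eq:measure_equiv}.

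For the gradient comparison, write $|\nabla_\Sigma f_\Sigma|_g^2\circ F=\hat g^{ij}\partial_if\partial_jf$, and note $\int_\Sigma|\nabla_\Sigma f_\Sigma|_g^2\,d\mu_\Sigma=\int_{S^2}\hat g^{ij}\partial_if\partial_jf\,d\hat\mu$. Set $a=1-2\varepsilon-2\varepsilon^2$ and $b=1+2\varepsilon+2\varepsilon^2$. From $a\bar g\le\hat g\le b\bar g$ we get $b^{-1}|\bar\nabla f|^2\le\hat g^{ij}\partial_if\partial_jf\le a^{-1}|\bar\nabla f|^2$. Combining with $a\,d\omega\le d\hat\mu\le b\,d\omega$ and integrating gives
\[
\frac ab\int_{S^2}|\bar\nabla f|^2\,d\omega\le\int_\Sigma|\nabla_\Sigma f_\Sigma|^2\,d\mu_\Sigma\le\frac ba\int_{S^2}|\bar\nabla f|^2\,d\omega .
\]
Finally $a/b=1-\frac{b-a}{b}=1-\frac{4\varepsilon+4\varepsilon^2}{1+2\varepsilon+2\varepsilon^2}$ and $b/a=1+\frac{4\varepsilon+4\varepsilon^2}{1-2\varepsilon-2\varepsilon^2}$, which are exactly the constants in \eqref{eq:L2_grad_comp}. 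Here $\varepsilon<\tfrac14$ guarantees $a>0$.

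I do not expect a real obstacle. The only delicate point is the careful bookkeeping of the lower bound. The metric bound uses $(1-\varepsilon)^2$ directly. The area bound uses $(1+u)\sqrt{(1+u)^2+|\bar\nabla u|^2}\ge(1-\varepsilon)^2$, together with $(1-\varepsilon)^2\ge a$. Only the $C^1$ part of the hypothesis $\|u\|_{C^3}\le\varepsilon$ is used.
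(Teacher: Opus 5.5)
Your proposal is correct and follows the paper's proof: compute $\hat g=(1+u)^2\bar g+du\otimes du$, bound it between $(1-\varepsilon)^2\bar g$ and $\bigl((1+\varepsilon)^2+\varepsilon^2\bigr)\bar g$, deduce the measure bound from the determinant, invert the metric bound, and combine the two to get the ratios $a/b$ and $b/a$. The only difference is that you write out the determinant estimate $d\hat\mu=(1+u)\sqrt{(1+u)^2+|\bar\nabla u|^2}\,d\omega$ explicitly, which the paper merely asserts.
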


\begin{proof}
Differentiating $F(\omega)=(1+u(\omega))\omega$ gives
\[
\hat g_{ij}
=(1+u)^2\bar g_{ij}+\partial_iu \partial_ju.
\]
Hence, for $X\in TS^2$,
\[
(1-\varepsilon)^2\bar g(X,X)
\le
\hat g(X,X)
\le
\left((1+\varepsilon)^2+\varepsilon^2\right)\bar g(X,X),
\]
which implies \eqref{eq:metric_equiv}. The corresponding determinant estimate implies \eqref{eq:measure_equiv}.

Since $\hat g=F^*g$, the map \( F:(S^2,\hat g)\rightarrow(\Sigma,g) \) is an isometry. Therefore
\[
\int_\Sigma|\nabla_\Sigma f_\Sigma|_g^2\,d\mu_\Sigma
=
\int_{S^2}|\hat\nabla f|_{\hat g}^2\,d\hat\mu.
\]
By a direct computation and taking inverses in \eqref{eq:metric_equiv}, we obtain
\[
\frac{1}{1+(2+2\varepsilon)\varepsilon}
|\bar\nabla f|_{\bar g}^2
\le
|\hat\nabla f|_{\hat g}^2
\le
\frac{1}{1-(2+2\varepsilon)\varepsilon}
|\bar\nabla f|_{\bar g}^2.
\]
By \eqref{eq:measure_equiv} , it follows that
\[
\frac{1-(2+2\varepsilon)\varepsilon}
{1+(2+2\varepsilon)\varepsilon}
\int_{S^2}|\bar\nabla f|_{\bar g}^2\,d\omega
\le
\int_\Sigma|\nabla_\Sigma f_\Sigma|_g^2\,d\mu_\Sigma
\]
and
\[
\int_\Sigma|\nabla_\Sigma f_\Sigma|_g^2\,d\mu_\Sigma
\le
\frac{1+(2+2\varepsilon)\varepsilon}
{1-(2+2\varepsilon)\varepsilon}
\int_{S^2}|\bar\nabla f|_{\bar g}^2\,d\omega.
\]
Rewriting the two coefficients yields \eqref{eq:L2_grad_comp}.
\end{proof}

\begin{lemma}[Mean curvature expansion for small radial graphs]
\label{111}
There exists a function $Q(u)$ on $S^2$ such that
\begin{equation}
\label{eq:H_expand_Q}
H_\Sigma\circ F
=
2-(\bar\Delta+2)u+Q(u),
\end{equation}
where \( Q(0)=0, \, DQ(0)=0. \) Moreover, whenever \( \|u\|_{C^3(S^2)}\le\varepsilon, \) one has
\begin{equation}
\label{eq:Q_grad_pt}
|\bar\nabla Q(u)|_{\bar g}
\le
C\varepsilon
\left(
|\bar\nabla^3u|_{\bar g}
+
|\bar\nabla^2u|_{\bar g}
+
|\bar\nabla u|_{\bar g}
\right).
\end{equation}
\end{lemma}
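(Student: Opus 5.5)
The plan is to derive an explicit formula for the mean curvature of the radial graph, read off its linearization at $u=0$, and then estimate the derivative of the remainder by a first-order Taylor expansion in $u$.

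\textbf{Step 1: exact formula for $H_\Sigma\circ F$.} Write the surface as the level set $\{|x| = 1+u(x/|x|)\}$, or use the standard radial graph formula. With $r=1+u$ and $w=\sqrt{r^2+|\bar\nabla u|_{\bar g}^2}$, the outward normal is
\[
\nu=\frac{r\,\omega-\bar\nabla u}{w}.
\]
Computing $h_{ij}=-(\nu,\partial_i\partial_j F)$ in normal coordinates on $S^2$, and using $\partial_i\partial_j\omega=-\bar g_{ij}\omega$ at the center, gives
\[
\hat h_{ij}=\frac{1}{w}\Bigl(r^2\bar g_{ij}+2\partial_iu\,\partial_ju-r\,\bar\nabla^2_{ij}u\Bigr).
\]
Then $H=\hat g^{ij}\hat h_{ij}$, where $\hat g_{ij}=r^2\bar g_{ij}+\partial_iu\,\partial_ju$ is inverted explicitly by Sherman--Morrison:
\[
\hat g^{ij}=r^{-2}\Bigl(\bar g^{ij}-\frac{\bar\nabla^iu\,\bar\nabla^ju}{w^2}\Bigr).
\]
The result is that $H_\Sigma\circ F=\mathcal H(u,\bar\nabla u,\bar\nabla^2u)$ for a smooth function $\mathcal H$, defined on the region $|u|<\tfrac14$, $|\bar\nabla u|<\tfrac14$. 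The function $\mathcal H$ is built from the tensors $\bar g$, $\bar\nabla u$, $\bar\nabla^2u$ by contractions, so its coefficients depend only on $\bar g$. It is affine in $\bar\nabla^2 u$.

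\textbf{Step 2: linearization.} Set $Q(u):=\mathcal H(u,\bar\nabla u,\bar\nabla^2u)-2+(\bar\Delta+2)u$. Then $Q(0)=\mathcal H(0,0,0)-2=0$, since the unit sphere has $H=2$. Expanding to first order gives
\[
H\approx \frac{1}{r^2}\Bigl(2r^2-r\,\bar\Delta u\Bigr)+O(|\bar\nabla u|^2)\approx 2-\bar\Delta u-2u+\text{quadratic}.
\]
So $D\mathcal H(0)[u]=-(\bar\Delta+2)u$, which gives $DQ(0)=0$. In other words, $Q=\mathcal H(p)-\mathcal H(0)-D\mathcal H(0)p$, where $p=(u,\bar\nabla u,\bar\nabla^2u)$.

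\textbf{Step 3: the gradient bound.} Differentiate covariantly on $S^2$:
\[
\bar\nabla Q=\bigl(\partial_p\mathcal H(p)-\partial_p\mathcal H(0)\bigr)\cdot\bar\nabla p .
\]
The coefficients of $\mathcal H$ are built from $\bar g$, so its explicit $\bar g$-dependence is covariantly constant and contributes no extra terms. We have $\bar\nabla p=(\bar\nabla u,\bar\nabla^2u,\bar\nabla^3u)$. By the mean value theorem,
\[
|\partial_p\mathcal H(p)-\partial_p\mathcal H(0)|\le C|p|\le C\varepsilon .
\]
Here $C$ is the supremum of $|\partial_p^2\mathcal H|$ over the compact set $|u|,|\bar\nabla u|\le\tfrac14$, $|\bar\nabla^2 u|\le 1$. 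This supremum is finite because $r\ge\tfrac34$, which keeps $w$ and $r$ bounded away from zero. Combining gives exactly \eqref{eq:Q_grad_pt}.

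\textbf{Main obstacle.} The main obstacle is correctly deriving the second fundamental form in Step 1: getting the signs right with the paper's convention $h_{ij}=-(\nu,F_{ij})$ and the outward normal, so that the linear part is exactly $-(\bar\Delta+2)u$. As a check, the sphere of radius $1+c$ has $H=2/(1+c)\approx 2-2c$, which matches $-(\bar\Delta+2)c=-2c$. The Taylor step and the uniformity of the constants are routine once $\mathcal H$ is known to be smooth on a fixed compact domain.
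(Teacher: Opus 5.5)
Your proof is correct and essentially the paper's: both write $H_\Sigma\circ F$ as a smooth function of the $2$-jet $(u,\bar\nabla u,\bar\nabla^2u)$ and identify the linear part as $-(\bar\Delta+2)u$, then obtain \eqref{eq:Q_grad_pt} from the mean value theorem and the chain rule. The paper gets the linear part from the first variation formula $-\bar\Delta u-(|A|^2+\operatorname{Ric}(\nu,\nu))u$, whereas you expand the explicit radial-graph formula. One small slip: your intermediate display in Step~2 drops the factor $1/w\approx 1-u$ from $\hat h_{ij}$, and that factor is what produces the $-2u$ term (as written, the display gives $H=2$ for the sphere of radius $1+c$), so keep it when you write out the expansion.
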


\begin{proof}
Define \( M(u):=H_{\Sigma}\circ F. \) The first and second fundamental forms of a radial graph depend smoothly on the $2$-jet $(u,\bar\nabla u,\bar\nabla^2u)$, so $M$ is a smooth second-order quasilinear operator near $u=0$. At the unit sphere, \( M(0)=2. \) For the variation \( F_t(\omega)=(1+tu(\omega))\omega, \) the variational vector field at $t=0$ is $u\nu$. The first variation formula for mean curvature (see, for example, \cite[Exercise~2.13]{Lee2019GeometricRelativity}) gives
\[
DM(0)u
=
-\bar\Delta u
-
\left(
|A|_{\bar g}^2
+
\operatorname{Ric}_{\mathbb R^3}(\nu,\nu)
\right)u
=
-(\bar\Delta+2)u,
\]
since $|A|_{\bar g}^2=2$ on the unit sphere and the ambient Ricci curvature vanishes.

Taylor's theorem gives \( M(u) = 2-(\bar\Delta+2)u+Q(u), \, Q(0)=DQ(0)=0. \) More explicitly, the exact radial-graph formulas for the first and second fundamental forms show that, pointwise on $S^2$, \( M(u) = \mathcal M \left( u,\bar\nabla u,\bar\nabla^2u \right), \) where $\mathcal M$ is smooth in a fixed neighborhood of the zero jet; see, for example, the radial-graph formulas in \cite[Section~2, p.~4]{Cruz2017}. Thus \( Q(u) = \mathcal Q \left( u,\bar\nabla u,\bar\nabla^2u \right), \) where \( \mathcal Q(0,0,0)=0, \, D\mathcal Q(0,0,0)=0. \) Since \( |u| + |\bar\nabla u|_{\bar g} + |\bar\nabla^2u|_{\bar g} \le C\varepsilon, \) the mean value theorem gives \( |D\mathcal Q| \le C\varepsilon \) along the segment joining the zero jet to $(u,\bar\nabla u,\bar\nabla^2u)$. Applying the covariant chain rule therefore yields
\[
|\bar\nabla Q(u)|_{\bar g}
\le
C\varepsilon
\left(
|\bar\nabla^3u|_{\bar g}
+
|\bar\nabla^2u|_{\bar g}
+
|\bar\nabla u|_{\bar g}
\right),
\]
which proves \eqref{eq:Q_grad_pt}.
\end{proof}

\begin{prop}[Preliminary lower bound in the $l\ge2$ gauge]
\label{555}
Assume
\[
\|u\|_{C^3(S^2)}\le\varepsilon<\frac14,
\qquad
u\perp_{L^2(S^2)}(\mathcal H_0\oplus\mathcal H_1).
\]
There exists a constant $C>0$, depending only on the constants in Lemmas~\ref{lem:elliptic_gap} and~\ref{111}, such that
\begin{equation}
\label{eq:pre_tail}
\int_\Sigma
|\nabla_\Sigma H_\Sigma|_g^2
\,d\mu_\Sigma
\ge
\left[
1-
\frac{4\varepsilon+4\varepsilon^2}
{1+2\varepsilon+2\varepsilon^2}
-
C\varepsilon
\right]
\int_{S^2}
|\bar\nabla(\bar\Delta+2)u|_{\bar g}^2
\,d\omega.
\end{equation}

For every $\delta'\in(0,1)$ there exists $\varepsilon=\varepsilon(\bar g,\delta')\in(0,\frac14)$ such that
\[
\frac{4\varepsilon+4\varepsilon^2}
{1+2\varepsilon+2\varepsilon^2}
+
C\varepsilon
\le
\delta'.
\]
In particular, for
\[
\|u\|_{C^3(S^2)}\le\varepsilon(\bar g,\delta'),
\qquad
u\perp_{L^2(S^2)}(\mathcal H_0\oplus\mathcal H_1),
\]
we have
\[
\int_\Sigma
|\nabla_\Sigma H_\Sigma|_g^2
\,d\mu_\Sigma
\ge
(1-\delta')\int_{S^2}
|\bar\nabla(\bar\Delta+2)u|_{\bar g}^2
\,d\omega.
\]
\end{prop}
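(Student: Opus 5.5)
We pull $H_\Sigma$ back to $S^2$ via $F$, write it with the expansion of Lemma~\ref{111}, and compare the nonlinear remainder with the linear part using the spectral gap of Lemma~\ref{lem:elliptic_gap}. Set $f:=H_\Sigma\circ F$, so that $f_\Sigma=H_\Sigma$ in the notation of Lemma~\ref{lem:pullback_L2_comparison}. The lower half of \eqref{eq:L2_grad_comp} gives
\[
\int_\Sigma|\nabla_\Sigma H_\Sigma|_g^2\,d\mu_\Sigma
\ge
\left(1-\frac{4\varepsilon+4\varepsilon^2}{1+2\varepsilon+2\varepsilon^2}\right)\int_{S^2}|\bar\nabla f|_{\bar g}^2\,d\omega .
\]
By \eqref{eq:H_expand_Q}, the constant $2$ drops out under differentiation, so $\bar\nabla f=-\bar\nabla(\bar\Delta+2)u+\bar\nabla Q(u)$. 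We therefore need a lower bound for $\|\bar\nabla f\|_{L^2}^2$ in terms of $\mathscr Q(u)=\|\bar\nabla(\bar\Delta+2)u\|_{L^2}^2$.

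Write $X:=\bar\nabla(\bar\Delta+2)u$ and $Y:=\bar\nabla Q(u)$. Then
\[
\|X-Y\|^2\ge\|X\|^2-2\|X\|\|Y\|\ge\|X\|^2-2\|X\|\|Y\|-\|Y\|^2 .
\]
It suffices to show $\|Y\|_{L^2}\le C\varepsilon\|X\|_{L^2}$, which gives $\|\bar\nabla f\|^2\ge(1-2C\varepsilon-C^2\varepsilon^2)\|X\|^2$. Since $\varepsilon<\tfrac14$, the quadratic term is absorbed into a linear one, giving $(1-C'\varepsilon)\|X\|^2$. To bound $Y$, integrate the square of the pointwise estimate \eqref{eq:Q_grad_pt}:
\[
\|\bar\nabla Q(u)\|_{L^2}^2\le C\varepsilon^2\sum_{k=1}^3\|\bar\nabla^k u\|_{L^2}^2 .
\]
The hypothesis $u\perp\mathcal H_0\oplus\mathcal H_1$ means $u=u_{\ge2}$, so Lemma~\ref{lem:elliptic_gap} bounds each $\|\bar\nabla^ku\|_{L^2}^2$, $k=1,2,3$, by $C\|X\|_{L^2}^2$. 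Hence $\|Y\|\le C\varepsilon\|X\|$.

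Multiplying the two factors gives
\[
\left(1-a\right)(1-C'\varepsilon)\ge 1-a-C'\varepsilon,
\qquad
a:=\frac{4\varepsilon+4\varepsilon^2}{1+2\varepsilon+2\varepsilon^2}.
\]
This inequality holds because $a\,C'\varepsilon\ge0$, and it yields \eqref{eq:pre_tail}. The constant $C$ depends only on the constants of Lemmas~\ref{lem:elliptic_gap} and~\ref{111}. If the bracket is negative, the inequality is trivial anyway.

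For the second statement, the left side $a+C\varepsilon$ is continuous and increasing in $\varepsilon$ and vanishes at $0$. So for any $\delta'\in(0,1)$ we can choose $\varepsilon(\bar g,\delta')\in(0,\tfrac14)$ small enough that it is at most $\delta'$. Then \eqref{eq:pre_tail} gives the stated bound with factor $1-\delta'$. Note that shrinking $\varepsilon$ does not enlarge $C$, since \eqref{eq:Q_grad_pt} holds uniformly for all smaller $\varepsilon$.

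The main point needing care is the remainder step. Its key ingredient is that the pointwise bound \eqref{eq:Q_grad_pt} involves only derivatives of $u$ of orders $1$ to $3$, with no zeroth-order term. Because of this, the spectral gap of Lemma~\ref{lem:elliptic_gap} controls it by $\mathscr Q(u)$ alone, using the orthogonality hypothesis. One should also check that $u\in C^3$ gives the $H^3$ regularity Lemma~\ref{lem:elliptic_gap} needs, which it does on the compact $S^2$.
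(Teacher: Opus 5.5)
Your proposal is correct and follows essentially the same argument as the paper: pull back via Lemma~\ref{lem:pullback_L2_comparison}, split $\bar\nabla(H_\Sigma\circ F)$ using Lemma~\ref{111}, bound $\|\bar\nabla Q(u)\|_{L^2}$ by $C\varepsilon\,\mathscr Q(u)^{1/2}$ through Lemma~\ref{lem:elliptic_gap} in the $l\ge2$ gauge, and multiply the two factors. The only cosmetic difference is that you handle the cross term by Cauchy--Schwarz, $\|X-Y\|^2\ge\|X\|^2-2\|X\|\|Y\|$, whereas the paper uses the weighted inequality $|a+b|^2\ge(1-s)|a|^2-s^{-1}|b|^2$ with $s=\varepsilon$; both give a factor $1-C\varepsilon$.
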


\begin{proof}
Applying Lemma~\ref{lem:pullback_L2_comparison} to $f=H_\Sigma\circ F$ gives
\begin{equation}
\label{eq:pullback_lower}
\int_\Sigma
|\nabla_\Sigma H_\Sigma|_g^2
\,d\mu_\Sigma
\ge
\left(
1-
\frac{4\varepsilon+4\varepsilon^2}
{1+2\varepsilon+2\varepsilon^2}
\right)
\int_{S^2}
|\bar\nabla(H_\Sigma\circ F)|_{\bar g}^2
\,d\omega.
\end{equation}
By Lemma~\ref{111},
\[
\bar\nabla(H_\Sigma\circ F)
=
-\bar\nabla(\bar\Delta+2)u
+
\bar\nabla Q(u).
\]
For $s\in(0,1)$,
\[
|a+b|^2
\ge
(1-s)|a|^2-s^{-1}|b|^2.
\]
Hence
\[
\int_{S^2}
|\bar\nabla(H_\Sigma\circ F)|_{\bar g}^2
\,d\omega
\ge
(1-s)\int_{S^2}
|\bar\nabla(\bar\Delta+2)u|_{\bar g}^2
\,d\omega
-
s^{-1}
\int_{S^2}|\bar\nabla Q(u)|^2\,d\omega.
\]
Since $u=u_{\ge2}$, Lemma~\ref{111} and Lemma~\ref{lem:elliptic_gap} imply
\[
\int_{S^2}|\bar\nabla Q(u)|^2\,d\omega
\le
C\varepsilon^2\int_{S^2}
|\bar\nabla(\bar\Delta+2)u|_{\bar g}^2
\,d\omega.
\]

Taking $s=\varepsilon$, we obtain
\[
\begin{aligned}
\int_{S^2}
|\bar\nabla(H_\Sigma\circ F)|_{\bar g}^2
\,d\omega
&\ge
(1-\varepsilon)\int_{S^2}
|\bar\nabla(\bar\Delta+2)u|_{\bar g}^2
\,d\omega
-
C\varepsilon \int_{S^2}
|\bar\nabla(\bar\Delta+2)u|_{\bar g}^2
\,d\omega\\
&\ge
\bigl(1-C\varepsilon\bigr)\int_{S^2}
|\bar\nabla(\bar\Delta+2)u|_{\bar g}^2
\,d\omega,
\end{aligned}
\]
after enlarging $C$ if necessary. Substituting this into \eqref{eq:pullback_lower}, we get
\[
\begin{aligned}
\int_\Sigma
|\nabla_\Sigma H_\Sigma|_g^2
\,d\mu_\Sigma
&\ge
\left(
1-
\frac{4\varepsilon+4\varepsilon^2}
{1+2\varepsilon+2\varepsilon^2}
\right)
\bigl(1-C\varepsilon\bigr)\int_{S^2}
|\bar\nabla(\bar\Delta+2)u|_{\bar g}^2
\,d\omega\\
&\ge
\left[
1-
\frac{4\varepsilon+4\varepsilon^2}
{1+2\varepsilon+2\varepsilon^2}
-
C\varepsilon
\right]\int_{S^2}
|\bar\nabla(\bar\Delta+2)u|_{\bar g}^2
\,d\omega,
\end{aligned}
\]
which proves \eqref{eq:pre_tail}.

\end{proof}

\subsection{Estimate of the trace--free second fundamental form}

The preceding estimates control the gradient of mean curvature in the evolution equation of Willmore energy. To compare it with the positive non-umbilical term, we also need to estimate \( (k_1-k_2)^2=2|A^\circ|^2. \)

\begin{prop}[Quantitative expansion of $A^\circ$ for a radial graph]
\label{444}
For sufficiently small $\varepsilon\in(0,\frac14)$, whenever \( \|u\|_{C^3(S^2)}\le\varepsilon, \) there exists a constant $C=C>0$, such that
\begin{equation}
\label{eq:A0_explicit}
\hat A^{\circ_{\hat g}}
=-(\bar\nabla^2u)^{\circ_{\bar g}}+\mathcal R,
\end{equation}
and
\begin{equation}
\label{eq:A0_remainder_compressed}
|\mathcal R|_{\bar g}
\le
C
\left(
|u|\,|\bar\nabla^2u|_{\bar g}
+|\bar\nabla u|_{\bar g}^2
+u^2
\right).
\end{equation}
The same estimate holds with $\hat A^{\circ_{\hat g}}$ replaced by $\hat A^{\circ_{\bar g}}$.
\end{prop}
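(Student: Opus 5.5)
We will compute the second fundamental form of the radial graph $F(\omega)=(1+u(\omega))\omega$ exactly in terms of the $2$-jet of $u$, and then Taylor-expand in that jet. Write $r:=1+u$ and work in local coordinates on $S^2$, regarding $\omega$ as the position vector in $\mathbb R^3$. We have $F_i=u_i\omega+r\omega_i$. The second derivatives of $\omega$ satisfy
\[
\partial_i\partial_j\omega=\bar\Gamma^k_{ij}\omega_k-\bar g_{ij}\omega .
\]
Using this, we obtain
\[
F_{ij}=\bigl(u_{;ij}-r\bar g_{ij}\bigr)\omega+\bar\Gamma^k_{ij}F_k+\bigl(\text{terms } u_i\omega_j+u_j\omega_i\bigr),
\]
with covariant derivatives taken with respect to $\bar g$. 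The outward unit normal is
\[
\nu=\frac{r\omega-\bar\nabla u}{\sqrt{r^2+|\bar\nabla u|^2_{\bar g}}},
\]
where $\bar\nabla u$ is viewed as a tangent vector in $\mathbb R^3$.

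With the sign convention $h_{ij}=-(\nu,F_{ij})$, the standard computation gives the exact formula
\[
\hat h_{ij}=\frac{1}{\sqrt{r^2+|\bar\nabla u|^2}}\Bigl(-r\,u_{;ij}+2u_iu_j+r^2\bar g_{ij}\Bigr).
\]
As a consistency check, at $u=0$ this reduces to $\hat h=\bar g$, so $H=2$, matching Lemma~\ref{111}. We also recall from Lemma~\ref{lem:pullback_L2_comparison} that
\[
\hat g_{ij}=r^2\bar g_{ij}+u_iu_j .
\]

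\textbf{The $\bar g$-trace-free part.} Since $\bar g^{\circ_{\bar g}}=0$, taking the trace-free part with respect to $\bar g$ removes the $r^2\bar g_{ij}$ term entirely. This leaves
\[
\hat A^{\circ_{\bar g}}=\frac{1}{\sqrt{r^2+|\bar\nabla u|^2}}\Bigl(-r(\bar\nabla^2u)^{\circ_{\bar g}}+2(du\otimes du)^{\circ_{\bar g}}\Bigr).
\]
Set $\phi:=r/\sqrt{r^2+|\bar\nabla u|^2}$. Then $\phi-1=O(|u|+|\bar\nabla u|^2)$, uniformly for $\varepsilon<\frac14$, since $r\ge\frac34$. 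Consequently
\[
\hat A^{\circ_{\bar g}}+(\bar\nabla^2u)^{\circ_{\bar g}}=(1-\phi)(\bar\nabla^2u)^{\circ}+O(|\bar\nabla u|^2).
\]
This is bounded by
\[
C\bigl(|u||\bar\nabla^2u|+|\bar\nabla u|^2|\bar\nabla^2u|+|\bar\nabla u|^2\bigr)\le C\bigl(|u||\bar\nabla^2u|+|\bar\nabla u|^2\bigr),
\]
where the last step uses $|\bar\nabla^2u|\le\varepsilon\le 1$. This proves the $\bar g$-version of the estimate.

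\textbf{Passing to the $\hat g$-trace-free part.} We write
\[
\hat A^{\circ_{\hat g}}=\hat A-\tfrac12(\operatorname{tr}_{\hat g}\hat A)\hat g,
\qquad
\hat A^{\circ_{\bar g}}=\hat A-\tfrac12(\operatorname{tr}_{\bar g}\hat A)\bar g,
\]
so that
\[
\hat A^{\circ_{\hat g}}-\hat A^{\circ_{\bar g}}=\tfrac12(\operatorname{tr}_{\bar g}\hat A)\bar g-\tfrac12(\operatorname{tr}_{\hat g}\hat A)\hat g .
\]
The key point is that each piece is already small:
\begin{itemize}
\item $\hat A=\bar g+O(|u|+|\bar\nabla u|+|\bar\nabla^2u|)$;
\item $\hat g=r^2\bar g+du\otimes du$;
\item $\hat g^{-1}=r^{-2}\bar g^{-1}+O(|\bar\nabla u|^2)$.
\end{itemize}
Substituting these, we get
\[
\operatorname{tr}_{\hat g}\hat A\,\hat g=\bigl(r^{-2}\operatorname{tr}_{\bar g}\hat A\bigr)\,r^2\bar g+O\bigl(|\bar\nabla u|^2\bigr)=\operatorname{tr}_{\bar g}\hat A\,\bar g+O\bigl(|\bar\nabla u|^2\bigr).
\]
The $r^2$ factors cancel exactly, and the $du\otimes du$ contributions are quadratic in $\bar\nabla u$. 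Hence the two trace-free parts differ by $O(|\bar\nabla u|^2)$, and the estimate transfers to $\hat A^{\circ_{\hat g}}$ with the same form of remainder. The $u^2$ term is kept in the bound only for safety.

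\textbf{Main obstacle.} The main obstacle is obtaining the exact formula for $\hat h_{ij}$ with correct signs and the correct $\bar\nabla u$-terms. In particular, it must be checked that no term linear in $u$ times $\bar g$ survives outside pure-trace pieces. I would verify this first on the linear level against Lemma~\ref{111}: tracing the linearization $-\bar\nabla^2u+u\bar g$ should give $-(\bar\Delta+2)u$ up to the metric correction $-2u$. The cancellation of the $r^2$ factors in the trace comparison is then the remaining point needing care.
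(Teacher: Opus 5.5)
Your proposal is correct and follows essentially the same route as the paper: the exact radial-graph formula $\hat h_{ij}=(r^2\bar g_{ij}+2u_iu_j-r\,u_{;ij})/\sqrt{r^2+|\bar\nabla u|_{\bar g}^2}$, removal of the pure-trace part with respect to $\bar g$, and then a comparison of the $\bar g$- and $\hat g$-trace-free projections. The one difference is minor: the paper splits $\hat A=(1+u)\bar g+T$ and bounds the two pieces separately, whereas you use directly that the trace-free projection is unchanged by the conformal factor $r^2$, so only the $du\otimes du$ part of $\hat g$ contributes, and it contributes an $O(|\bar\nabla u|_{\bar g}^2)$ error.
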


\begin{proof}
Set $r:=1+u$, $r_i:=\partial_ir$, $r_{ij}:=\partial_i\partial_jr$, and $r^i:=\bar g^{ij}r_j$. Set $u_i:=\partial_iu$, and $u_{ij}:=\partial_i\partial_ju$. For a local $\bar g$-orthonormal frame $\{e_i\}_{i=1,2}$ on $S^2$, \( F_i=r_i\omega+re_i, \) so
\begin{equation}
\label{eq:g_formula}
\hat g_{ij}=r^2\bar g_{ij}+r_i r_j.
\end{equation}
Notice that the vector \( \mathcal N=r\omega-\bar\nabla r \) is normal to the graph. Define \( W:=|\mathcal N| =\sqrt{r^2+|\bar\nabla r|_{\bar g}^2}. \) Then the outward unit normal is $\hat\nu=\mathcal N/W$. Since $r=1+u$ and $\|u\|_{C^1(S^2)}\le\varepsilon$, we have
\begin{equation}
\label{eq:W_expand}
W
=1+u+O\left(u^2+|\bar\nabla u|_{\bar g}^2\right),
\qquad
\frac1W
=1-u+O\left(u^2+|\bar\nabla u|_{\bar g}^2\right).
\end{equation}

Using \( D_{e_i}e_j =\bar\nabla_{e_i}e_j-\bar g_{ij}\omega \) and the convention \( \hat h_{ij}=-(D_{e_i}F_j,\hat\nu), \) a direct computation gives
\[
\hat h_{ij}
=
\frac1W
\left(
r^2\bar g_{ij}+2r_ir_j-rr_{ij}
\right).
\]
Substituting $r=1+u$ and using \eqref{eq:W_expand}, we obtain
\begin{equation}
\label{eq:h_expand}
\hat h_{ij}
=
\bar g_{ij}-u_{ij}+u\bar g_{ij}+\mathtt{Q}_{ij},
\end{equation}
where
\begin{equation}
\label{eq:Q_tensor_compressed}
|\mathtt{Q}|_{\bar g}
\le
C
\left(
|u|\,|\bar\nabla^2u|_{\bar g}
+|\bar\nabla u|_{\bar g}^2
+u^2
\right).
\end{equation}
Taking the trace--free part with respect to $\bar g$, the pure-trace terms $\bar g$ and $u\bar g$ disappear. Hence
\begin{equation}
\label{eq:h_tf_bg}
\hat A^{\circ_{\bar g}}
=-(\bar\nabla^2u)^{\circ_{\bar g}}+\mathcal R,
\end{equation}
where $\mathcal R$ satisfies the same bound as $\mathtt{Q}$ in \eqref{eq:Q_tensor_compressed}.

It remains to compare the trace--free projections associated with $\bar g$ and $\hat g$. From \eqref{eq:g_formula}, \( \hat g = \bar g+(2u+u^2)\bar g+du\otimes du, \) and therefore
\begin{equation}
\label{eq:metric_inverse_short}
|\hat g-\bar g|_{\bar g}
+|\hat g^{-1}-\bar g^{-1}|_{\bar g}
\le
C
\left(|u|+|\bar\nabla u|_{\bar g}^2\right).
\end{equation}
Moreover, the explicit inverse
\[
\hat g^{ij}
=
\frac1{r^2}
\left(
\bar g^{ij}
-
\frac{r^ir^j}{r^2+|\bar\nabla r|_{\bar g}^2}
\right)
\]
shows that
\begin{equation}
\label{eq:pure_trace_short}
|((1+u)\bar g)^{\circ_{\hat g}}|_{\bar g}
\le
C|\bar\nabla u|_{\bar g}^2.
\end{equation}

Rewriting \eqref{eq:h_expand}, we have \( \hat A=(1+u)\bar g+T, \, T:=-\bar\nabla^2u+\mathtt{Q}. \) Since $\bar g_{ij}+u\,\bar g_{ij}$ is pure trace with respect to $\bar g$, we have \( \hat h^{\circ_{\bar g}}=T^{\circ_{\bar g}}. \) Moreover,
\begin{equation}
\label{T.estimate}
|T|_{\bar g}
\le
\left(|\bar\nabla^2u|_{\bar g}+|\mathtt{Q}|_{\bar g}\right)
\le
C
\left(|\bar\nabla^2u|_{\bar g}+|u|\,|\bar\nabla^2u|_{\bar g}+|\bar\nabla u|_{\bar g}^2+u^2\right).
\end{equation}
Now
\[
\hat A^{\circ_{\hat g}}=\hat h^{\circ_{\hat g}}
=
T^{\circ_{\hat g}}
+
(\bar g+u\bar g)^{\circ_{\hat g}}.
\]

Using the definition of the trace--free projection,
\[
T^{\circ_{\hat g}}-T^{\circ_{\bar g}}
=
-\frac12
\left(
\operatorname{tr}_{\hat g}T-
\operatorname{tr}_{\bar g}T
\right)\hat g
-
\frac12
(\operatorname{tr}_{\bar g}T)(\hat g-\bar g).
\]
Together with \eqref{eq:metric_inverse_short}, this yields
\[
|T^{\circ_{\hat g}}-T^{\circ_{\bar g}}|_{\bar g}
\le
C
\left(|u|+|\bar\nabla u|_{\bar g}^2\right)|T|_{\bar g}.
\]
Since the smallness of $\|u\|_{C^3(S^2)}$, combining this estimate with \eqref{eq:Q_tensor_compressed}, \eqref{eq:pure_trace_short} and \eqref{T.estimate} gives
\[
|\hat A^{\circ_{\hat g}}-\hat A^{\circ_{\bar g}}|_{\bar g}
\le
C
\left(
|u|\,|\bar\nabla^2u|_{\bar g}
+|\bar\nabla u|_{\bar g}^2
+u^2
\right).
\]
Combining this with \eqref{eq:h_tf_bg} proves \eqref{eq:A0_explicit}--\eqref{eq:A0_remainder_compressed}.
\end{proof}

The estimates obtained so far become coercive only after the kernel generated by the \(l=0\) and \(l=1\) spherical harmonics is removed. Since these modes come from the geometric symmetries of dilation and translation, we now impose a nonlinear transformation adapted to exactly those symmetries.

\section{Gauge Fixing of the Low Spherical Harmonic Modes}
In this section we eliminate the \(l=0\) and \(l=1\) spherical harmonic modes of a small radial graph defined on unit sphere by a suitable translation and scaling. This transformation allows us to apply the spectral estimates from the previous section. Geometrically, the construction below chooses the center and scale of the nearby sphere so that the new graph function lies entirely in the \(l\ge2\) subspace.

Throughout, \(S^2\) is equipped with its standard metric $\bar g$ and area measure \(d\omega\), and \(\mathcal H_l\) denotes the eigenspace of \(-\bar\Delta\) with eigenvalue \(l(l+1)\).

\begin{prop}[Gauge fixing of the \(l=0,1\) modes by translation and scaling]
\label{lem:gauge_fix_l01_strict}
There exist constants \(\varepsilon_0>0\) and \(C>0\) with the following property. Let \( \Sigma = \{(1+u(\omega))\omega:\omega\in S^2\} \subset\mathbb R^3 \) be a smooth closed strictly convex surface satisfying \( \|u\|_{C^3(S^2)}\le\varepsilon_0. \) Then there exist \(a\in\mathbb R^3\) and \(\rho>0\) such that \( \Sigma^{a,\rho}:=\rho(\Sigma-a) \) can be written as \( \Sigma^{a,\rho} = \{(1+\check u(\omega))\omega:\omega\in S^2\}, \) where
\begin{equation}
\label{eq:l01_gauge_exact}
\int_{S^2}\check u\,d\omega=0,
\qquad
\int_{S^2}\check u(\omega)\omega\,d\omega=0.
\end{equation}
Equivalently, \( \check u\perp_{L^2(S^2)}(\mathcal H_0\oplus\mathcal H_1). \) Moreover,
\begin{equation}
\label{eq:a_rho_size}
|a|+|\rho-1|
\le
C\|u\|_{C^1(S^2)},
\end{equation}
and
\begin{equation}
\label{eq:checku_C3_bound}
\|\check u\|_{C^3(S^2)}
\le
C\|u\|_{C^3(S^2)}.
\end{equation}
\end{prop}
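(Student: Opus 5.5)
We will prove the statement by the implicit function theorem applied to a finite-dimensional map of the four parameters \((a,\rho)\). First we describe the new radial function. For \(|a|\) small and \(\rho\) near \(1\), the point \(a\) lies well inside the star-shaped surface \(\Sigma\). In fact \(\Sigma\) is strictly convex and \(C^1\)-close to \(S^2\), so every ray from \(a\) meets \(\Sigma\) exactly once and transversally. Hence \(\Sigma-a\) is again a radial graph \(\{r_a(\omega)\omega\}\). The function \(r_a(\omega)\) is defined implicitly by
\[
\bigl(1+u(\eta)\bigr)\eta=a+r\,\omega,\qquad \eta\in S^2,\ r>0 .
\]
Since \(|\eta|=1\), this is equivalent to \(r=\phi(\omega,a)\) with \(\eta=(a+r\omega)/|a+r\omega|\), i.e.
\[
|a+r\omega|=1+u\!\left(\tfrac{a+r\omega}{|a+r\omega|}\right).
\]
We solve this for \(r\) by the implicit function theorem in the scalar variable \(r\). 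The \(r\)-derivative of \(|a+r\omega|-1-u(\cdot)\) equals \(1+O(|a|+\|u\|_{C^1})\), so it is nonzero. This gives \(r_a(\omega)\), smooth in \((\omega,a)\), and \(\rho(\Sigma-a)\) has radial function \(1+\check u:=\rho\, r_a\). Expanding at \(a=0\),
\[
r_a(\omega)=1+u(\omega)+a\cdot\omega+O\bigl(|a|^2+|a|\,\|u\|_{C^1}\bigr).
\]

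Next, define \(\Phi:\mathbb R^3\times(0,\infty)\to\mathbb R^4\) by
\[
\Phi(a,\rho):=\Bigl(\int_{S^2}(\rho r_a-1)\,d\omega,\ \int_{S^2}(\rho r_a-1)\,\omega\,d\omega\Bigr).
\]
For \(u=0\) we have \(r_a(\omega)=a\cdot\omega+\sqrt{1-|a|^2+(a\cdot\omega)^2}\), and \(\Phi(0,1)=0\). Using \(\int_{S^2}\omega_i\omega_j\,d\omega=\tfrac{4\pi}{3}\delta_{ij}\), the differential at \((0,1)\) is
\[
D\Phi(0,1)(b,\sigma)=\Bigl(4\pi\sigma,\ \tfrac{4\pi}{3}b\Bigr),
\]
which is invertible. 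We then regard \(\Phi=\Phi_u\) as depending on the parameter \(u\). The dependence is continuous in \(C^1\) as a map of \((a,\rho)\), uniformly on a fixed neighborhood of \((0,1)\), with \(\|\Phi_u-\Phi_0\|_{C^1}\le C\|u\|_{C^1}\). A quantitative inverse function theorem, or equivalently a contraction-mapping argument for \((a,\rho)\mapsto(a,\rho)-D\Phi_0(0,1)^{-1}\Phi_u(a,\rho)\) on a small ball, then produces a unique zero \((a,\rho)\) with \(|a|+|\rho-1|\le C|\Phi_u(0,1)|\). Since \(\Phi_u(0,1)=(\int u,\int u\,\omega)\), this quantity is at most \(C\|u\|_{C^0}\), which gives \eqref{eq:a_rho_size}. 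At this zero, \eqref{eq:l01_gauge_exact} holds by construction.

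Finally, for \eqref{eq:checku_C3_bound} we write
\[
\check u=\rho\,(r_a-1-a\cdot\omega-\tilde r_a)+\rho\,(a\cdot\omega)+\rho\,\tilde r_a+(\rho-1),
\]
where \(\tilde r_a\) denotes the radial function of the translated unit sphere minus \(1\). Differentiating the implicit equation up to three times in \(\omega\) shows that \(\|r_a-r_a^{(0)}\|_{C^3}\le C\|u\|_{C^3}\), where \(r_a^{(0)}\) is the radial function of the translated round sphere. The \(C^3\)-norm of \(r_a^{(0)}-1\) is bounded by \(C|a|\), which is at most \(C\|u\|_{C^1}\). Combining these bounds with \(|\rho-1|\le C\|u\|_{C^1}\) gives \eqref{eq:checku_C3_bound}.

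The main obstacle is making the implicit-function step uniform, so that the constants \(\varepsilon_0\) and \(C\) are independent of \(u\). We will handle it by treating \(r_a\) as a smooth function of \((\omega,a)\) and of the \(3\)-jet of \(u\) evaluated at the point \(\eta\), with all derivatives bounded on a fixed neighborhood. The \(C^3\) bound requires chain-rule derivatives of \(u\) composed with the \(a\)-dependent map \(\omega\mapsto\eta\); these involve \(u\) only up to third order, so \(C^3\) smallness suffices.
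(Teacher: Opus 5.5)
Your proposal is correct and follows essentially the paper's route: you solve the four moment conditions for $(a,\rho)$ by an implicit-function argument around the invertible linearization at $(0,1)$, bound $|a|+|\rho-1|$ by $C|\Psi(u,0,1)|\le C\|u\|_{C^0}$, and obtain the $C^3$ bound from the chain rule; your per-$u$ contraction and scalar implicit equation for $r_a$ replace the paper's Banach-space implicit function theorem and degree argument, which is only a difference in packaging. There is one harmless sign slip: with your convention $(1+u(\eta))\eta=a+r\omega$, the translated unit sphere has $r_a(\omega)=-a\cdot\omega+\sqrt{1-|a|^2+(a\cdot\omega)^2}$, so the linear term is $-a\cdot\omega$ and the differential is $(4\pi\sigma,-\tfrac{4\pi}{3}b)$, as in the paper, which is still invertible.
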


\begin{proof}
For \((u,a,\rho)\) near \((0,0,1)\), set
\[
Z_{u,a,\rho}(\vartheta)
:=
\rho\bigl((1+u(\vartheta))\vartheta-a\bigr), \qquad 
Z_{u,a,\rho}(\vartheta) : \Sigma \to \rho(\Sigma-a).
\]
Choose \(\varepsilon_1>0\) sufficiently small such that whenever \( \|u\|_{C^3(S^2)}+|a|+|\rho-1| < \varepsilon_1, \) we have \( |Z_{u,a,\rho}(\vartheta)|\ge\frac12 \,\text{for all }\vartheta\in S^2. \) Define
\[
\Phi_{u,a,\rho}(\vartheta)
:=
\frac{Z_{u,a,\rho}(\vartheta)}
{|Z_{u,a,\rho}(\vartheta)|}, \qquad \text{for}\, \|u\|_{C^3(S^2)}+|a|+|\rho-1|<\varepsilon_1.
\]
Since the radial projection \( P(z)=\frac{z}{|z|} \) is smooth away from the origin, \(\Phi_{u,a,\rho}=P\circ Z_{u,a,\rho}\) is \(C^3\) and depends continuously on \((u,a,\rho)\) in the \(C^3\)-topology. Moreover, \( \Phi_{0,0,1}=\operatorname{Id}_{S^2}. \) By decreasing \(\varepsilon_1\) if necessary, we may assume that \(\Phi_{u,a,\rho}\) is a local diffeomorphism whenever \( \|u\|_{C^3(S^2)}+|a|+|\rho-1| < \varepsilon_1. \) Since \(\Phi_{u,a,\rho}\) is homotopic to the identity, it has degree one. Being a local diffeomorphism of the compact connected manifold \(S^2\), it is a covering map; degree one then implies that it is one-to-one, and hence a global \(C^3\)-diffeomorphism of \(S^2\); see, for example, \cite[Lemma~3.3, p.~9]{GoldsteinGrochulskaHajlasz2025}.

Consequently, \(\rho(\Sigma-a)\) has a unique radial representation \( \rho(\Sigma-a) = \{(1+\check u^{\,u,a,\rho}(\omega))\omega:\omega\in S^2\}, \) and
\begin{equation}
\label{check.u.expression}
\check u^{\,u,a,\rho}
\bigl(\Phi_{u,a,\rho}(\vartheta)\bigr)
=
|Z_{u,a,\rho}(\vartheta)|-1.
\end{equation}

Let \(J_{u,a,\rho}\) be the Jacobian of \(\Phi_{u,a,\rho}\), defined by \( \Phi_{u,a,\rho}^*d\omega = J_{u,a,\rho}\,d\omega. \) By decreasing \(\varepsilon_1\) once more if necessary, the maps
\[
(u,a,\rho)\longmapsto Z_{u,a,\rho},
\qquad
(u,a,\rho)\longmapsto\Phi_{u,a,\rho},
\qquad
(u,a,\rho)\longmapsto J_{u,a,\rho}
\]
are \(C^1\) whenever \( \|u\|_{C^3(S^2)}+|a|+|\rho-1| < \varepsilon_1. \) Define
\[
\Psi(u,a,\rho)
:=
\left(
\int_{S^2}\check u^{\,u,a,\rho}(\omega)\,d\omega,
\int_{S^2}\check u^{\,u,a,\rho}(\omega)\omega\,d\omega
\right).
\]
Using \eqref{check.u.expression} and changing variables \(\omega=\Phi_{u,a,\rho}(\vartheta)\), we obtain
\[
\Psi(u,a,\rho)
=
\left(
\int_{S^2}(|Z(\vartheta)|-1)J(\vartheta)\,d\omega,
\int_{S^2}(|Z(\vartheta)|-1)\Phi(\vartheta) J(\vartheta)\,d\omega
\right),
\]
where, for brevity, \(Z=Z_{u,a,\rho}\), \(\Phi=\Phi_{u,a,\rho}\), and \(J=J_{u,a,\rho}\). Hence \(\Psi\) is \(C^1\). We next show that the linearization of $\Psi$ with respect to $(a,\rho)$ at $(0,0,1)$ is invertible.

At the unit sphere \(u=0\), a variation of the scaling factor $1+s\in\mathbb{R}$ gives
\[
\left.
\frac{d}{ds}
\right|_{s=0}
\check u^{\,0,0,1+s}(\omega)
=1,
\]
while a translation in the direction \(b\in\mathbb R^3\) gives
\[
\left.
\frac{d}{ds}
\right|_{s=0}
\check u^{\,0,sb,1}(\omega)
=-b\cdot\omega.
\]
Recall that
\[
\mathcal H_0=\operatorname{span}\{1\},
\qquad
\mathcal H_1=\operatorname{span}\{\omega_1,\omega_2,\omega_3\},
\]
where \( \omega=(\omega_1,\omega_2,\omega_3)\in S^2\subset\mathbb R^3. \) Thus the moment conditions in \eqref{eq:l01_gauge_exact} are precisely equivalent to \( \check u\perp_{L^2(S^2)}(\mathcal H_0\oplus\mathcal H_1). \) Moreover, using the standard symmetry identities
\[
\int_{S^2}\omega\,d\omega=0,
\qquad
\int_{S^2}\omega_i\omega_j\,d\omega
=
\frac{4\pi}{3}\delta_{ij},
\]
we obtain
\[
\begin{aligned}
D_{(a,\rho)}\Psi(0,0,1)[b,\eta]
&=
\left(
\int_{S^2}(\eta-b\cdot\omega)\,d\omega,
\int_{S^2}(\eta-b\cdot\omega)\omega\,d\omega
\right)\\
&=
\left(
4\pi\eta,
-\frac{4\pi}{3}b
\right),
\end{aligned}
\]
which is an invertible linear map from \(\mathbb R^3\times\mathbb R\) onto \(\mathbb R\times\mathbb R^3\).

The Banach--space implicit function theorem \cite[Theorem~4.E, pp.~250--251]{Zeidler1995Main} therefore yields there exist an open neighborhood \( U_0\subset C^3(S^2) \) of \(0\), an open neighborhood \( W_0\subset\mathbb R^3\times\mathbb R_+ \) of \((0,1)\), and a unique \(C^1\) map \( \gamma:U_0\longrightarrow W_0, \, \gamma(u)=(a(u),\rho(u)), \) such that \( \gamma(0)=(0,1) \) and
\begin{equation}\label{implicit.function.eq}
\Psi(u,a(u),\rho(u))=(0,0)
\qquad
\text{for every }u\in U_0.
\end{equation}
Moreover, if \( u\in U_0, \, (a,\rho)\in W_0, \) and \( \Psi(u,a,\rho)=(0,0), \) then \( (a,\rho)=(a(u),\rho(u)). \) Thus the corresponding radial function \(\check u\) thus satisfies \eqref{eq:l01_gauge_exact}.

We next record the quantitative bounds. Let \( x_0:=(0,1), \, x_*:=(a(u),\rho(u))\in \mathbb R^3\times\mathbb R_+, \) where $x_*$ is the solution of \ref{implicit.function.eq}. At \((u,0,1)\), one has \( \check u^{\,u,0,1}=u, \) and hence
\begin{equation}
\label{Psi.estimate}
|\Psi(u,x_0)|
\le
C\|u\|_{C^0(S^2)}
\le
C\|u\|_{C^1(S^2)}.
\end{equation}
Since \(\Psi\) is \(C^1\), the map \( (u,x)\longmapsto D_x\Psi(u,x) \) is continuous in a neighborhood of \((0,x_0)\). Let
\[
\mathcal X:=C^3(S^2),
\qquad
\mathcal Y:=\mathbb R^3\times\mathbb R_+ ,
\qquad
A_0:=D_x\Psi(0,x_0).
\]
Since \(A_0\) is invertible and the map \( u\longmapsto D_x\Psi(u,x_0) \) is continuous at \(u=0\), the openness of the set of invertible linear maps and the continuity of the inversion map imply that there exist an open neighborhood \( U_1\subset \mathcal X \) of \(0\) and a constant \(M>0\) such that, for every \(u\in U_1\), \( D_x\Psi(u,x_0) \) is invertible and \( \|D_x\Psi(u,x_0)^{-1}\|\le M. \)

We now keep this constant \(M\) fixed. Since the map \( (u,x)\longmapsto D_x\Psi(u,x) \) is continuous at \((0,x_0)\in \mathcal X\times\mathcal Y\), there exist an open neighborhood \( U_2 \times W \subset \mathcal X \times \mathcal Y \) of \((0,x_0)\) such that \( \|D_x\Psi(u,x)-A_0\| \le \frac{1}{4M} \) for every \(u\in U_2\) and every \(x\in W\).  Specially, we have \( \|D_x\Psi(u,x_0)-A_0\| \le \frac{1}{4M} \) for every \(u\in U_2\). Consequently, for every \(u\in U_2\) and every \(x\in W\), the triangle inequality gives
\[
\begin{aligned}
&\|D_x\Psi(u,x)-D_x\Psi(u,x_0)\|\\
&\le
\|D_x\Psi(u,x)-A_0\|
+
\|D_x\Psi(u,x_0)-A_0\|\\
&\le
\frac{1}{4M}+\frac{1}{4M}\\
&=
\frac{1}{2M}.
\end{aligned}
\]

Since the solution map \( \gamma:U_0\longrightarrow W_0, \, \gamma(u)=(a(u),\rho(u)), \) is continuous and satisfies \( \gamma(0)=x_0, \) and since \(W\) is an open neighborhood of \(x_0\), there exists an open neighborhood \( U_3\subset U_0 \) of \(0\) such that \( \gamma(U_3)\subset W. \)

Finally, set \( U:=U_1\cap U_2\cap U_3. \) Since \(U_1\), \(U_2\), and \(U_3\) are open neighborhoods of \(0\) in \(C^3(S^2)\), their intersection \(U\) is also an open neighborhood of \(0\). Hence, choose \(0<\varepsilon_0<\varepsilon_1\) such that
\[
\mathcal B_{\varepsilon_0}
:=
\left\{
u\in C^3(S^2):
\|u\|_{C^3(S^2)}\le\varepsilon_0
\right\}
\subset U.
\]
Thus, for every \(u\in\mathcal B_{\varepsilon_0}\), \( D_x\Psi(u,x_0)\text{ is invertible}, \, \|D_x\Psi(u,x_0)^{-1}\|\le M, \) and \( \|D_x\Psi(u,x)-D_x\Psi(u,x_0)\| \le \frac{1}{2M} \, \text{for every }x\in W. \) Moreover, since \(u\in U_3\), the solution \( x_*=\gamma(u) \) belongs to \(W\). Here both \(M\) and \(W\) are independent of the particular choice of \(u\in\mathcal B_{\varepsilon_0}\). Set \( h:=x_*-x_0=(a,\rho-1). \) Since \(x_*\in W\) and \(W\) has been chosen to be a ball centered at \(x_0\), the segment \( x_0+th,\, 0\le t\le1, \) is contained in \(W\). By the fundamental theorem of calculus,
\[
0=\Psi(u,x_*)
=
\Psi(u,x_0)+D_x\Psi(u,x_0)h+R(h),
\]
where
\[
R(h)
=
\int_0^1
\left(D_x\Psi(u,x_0+th)-D_x\Psi(u,x_0)\right)h\,dt.
\]
Since the segment $x_0+th$ lies in the chosen neighborhood of $x_0$, we have
\[
|R(h)|
\le
\int_0^1
\|D_x\Psi(u,x_0+th)-D_x\Psi(u,x_0)\|\,|h|\,dt
\le
\frac{1}{2M}|h|.
\]

From
\[
0=
\Psi(u,x_0)+D_x\Psi(u,x_0)h+R(h),
\]
we obtain
\[
D_x\Psi(u,x_0)h
=
-\Psi(u,x_0)-R(h).
\]
Since $D_x\Psi(u,x_0)$ is invertible, applying $D_x\Psi(u,x_0)^{-1}$ gives
\[
h
=
-D_x\Psi(u,x_0)^{-1}\Psi(u,x_0)
-
D_x\Psi(u,x_0)^{-1}R(h).
\]
Taking norms and using $\|D_x\Psi(u,x_0)^{-1}\|\le M$, we get
\[
|h|
\le
M|\Psi(u,x_0)|+M|R(h)|.
\]
Using the estimate for $R(h)$, this implies
\[
|h|
\le
M|\Psi(u,x_0)|
+
M\cdot \frac{1}{2M}|h|
=
M|\Psi(u,x_0)|+\frac12|h|.
\]
Absorbing the last term into the left-hand side yields \( |h|\le 2M|\Psi(u,x_0)|. \) Since \( h=(a,\rho-1), \) we have \( |a|+|\rho-1|\le C|h|. \) Therefore, combine with \eqref{Psi.estimate}, it holds that \( |a|+|\rho-1| \le C|h| \le C\|u\|_{C^1(S^2)}. \) This proves \eqref{eq:a_rho_size}.

It remains to prove the $C^3$ estimate for $\check u$. By decreasing \(\varepsilon_0\) if necessary, we may also assume that whenever \( \|u\|_{C^3(S^2)}+|a|+|\rho-1| < \varepsilon_0, \) the inverse diffeomorphism \(\Phi_{u,a,\rho}^{-1}\) has uniformly bounded \(C^3\)-norm. Setting \( f(\vartheta):=|Z_{u,a,\rho}(\vartheta)|-1, \) we have, by \eqref{check.u.expression}, \( \check u=f\circ\Phi_{u,a,\rho}^{-1}. \) It follows that
\[
\|\check u\|_{C^3(S^2)}
\le
C\|f\|_{C^3(S^2)}
\le
C\bigl(
\|u\|_{C^3(S^2)}+|a|+|\rho-1|
\bigr).
\]
Using \eqref{eq:a_rho_size}, we conclude that
\[
\|\check u\|_{C^3(S^2)}
\le
C\|u\|_{C^3(S^2)},
\]
which proves \eqref{eq:checku_C3_bound}.
\end{proof}

We have therefore reduced every sufficiently small radial graph, up to translation and scaling, to the \(l\ge2\) gauge while retaining quantitative \(C^3\)-control. The spectral estimates and radial-graph expansions can now be assembled into the geometric inequality needed for the flow.

\section{The Main Coercive Estimate and Willmore Monotonicity}
The lemmas collected in this section provide the technical preparation for the proof of the main theorem in the next section. We first prove the coercive inequality on small graphs defined on \(S^2\) whose parametrized function is orthogonal to the eigenspace \(\mathcal H_0\oplus\mathcal H_1\). Then we record the transformation laws needed to transfer the estimate between different centers and scales.

\begin{lemma}[Coercivity in the $l\ge2$ gauge]
\label{lem:l-ge-2-coercivity}
For every $\delta>0$, there exists \( \varepsilon=\varepsilon(\bar g,\delta)\in\left(0,\frac14\right) \) such that the following holds. Let \( \Gamma = \{(1+v(\omega))\omega:\omega\in S^2\} \) be a smooth strictly convex radial graph satisfying
\[
\|v\|_{C^3(S^2)}\le\varepsilon,
\qquad
v\perp_{L^2(S^2)}(\mathcal H_0\oplus\mathcal H_1).
\]
Then
\begin{equation}
\label{key.inequality}
\int_\Gamma
|\nabla_\Gamma H_\Gamma|_{g_\Gamma}^2\,d\mu_\Gamma
\ge
(4-\delta)
\int_\Gamma
(k_{1,\Gamma}-k_{2,\Gamma})^2\,d\mu_\Gamma.
\end{equation}
\end{lemma}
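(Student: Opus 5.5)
The plan is to compare both sides of \eqref{key.inequality} with the spectral quantities $\mathscr Q(v)$ and $\mathscr R(v)$ from Proposition~\ref{lem:QR-diagonal}, and then to use the mode-by-mode ratio for $l\ge2$. For the left-hand side, Proposition~\ref{555} applied with $u=v$ gives
\[
\int_\Gamma|\nabla_\Gamma H_\Gamma|_{g_\Gamma}^2\,d\mu_\Gamma\ge(1-\delta')\,\mathscr Q(v)
\]
once $\|v\|_{C^3}\le\varepsilon(\bar g,\delta')$. This is where the gauge condition $v\perp(\mathcal H_0\oplus\mathcal H_1)$ enters.

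For the right-hand side, write $(k_1-k_2)^2=2|A^\circ_\Gamma|^2_{g_\Gamma}$ and pull back by $F$. By Lemma~\ref{lem:pullback_L2_comparison} the metric $\hat g$ and the measure $d\hat\mu$ are $(1+C\varepsilon)$-comparable to $\bar g$ and $d\omega$. Hence
\[
\int_\Gamma(k_1-k_2)^2\,d\mu_\Gamma\le 2(1+C\varepsilon)\int_{S^2}|\hat A^{\circ_{\hat g}}|_{\bar g}^2\,d\omega .
\]
Proposition~\ref{444} gives $\hat A^{\circ_{\hat g}}=-(\bar\nabla^2v)^{\circ_{\bar g}}+\mathcal R$ with
\[
|\mathcal R|\le C\bigl(|v||\bar\nabla^2v|+|\bar\nabla v|^2+v^2\bigr)\le C\varepsilon\bigl(|\bar\nabla^2 v|+|\bar\nabla v|+|v|\bigr).
\]
Since $v=v_{\ge2}$, Proposition~\ref{prop:tracefree_hessian_gap} bounds $\|\mathcal R\|_{L^2}^2$ by $C\varepsilon^2\mathscr R(v)$. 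Using $|a+b|^2\le(1+s)|a|^2+(1+s^{-1})|b|^2$ with $s=\varepsilon$ then yields
\[
\int_\Gamma(k_1-k_2)^2\,d\mu_\Gamma\le 2(1+C\varepsilon)\,\mathscr R(v).
\]

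The spectral comparison is now explicit. By \eqref{eq:Q_spec_compressed} and \eqref{eq:R_spec},
\[
\mathscr Q(v)=\sum_{l\ge2}\lambda_l(\lambda_l-2)^2|v_{l,m}|^2,\qquad 2\mathscr R(v)=\sum_{l\ge2}\lambda_l(\lambda_l-2)|v_{l,m}|^2 .
\]
The mode-wise ratio is $\lambda_l-2\ge4$, with equality at $l=2$, so $\mathscr Q(v)\ge 4\cdot 2\mathscr R(v)$. Combining the three estimates gives
\[
\int_\Gamma|\nabla H|^2\ge (1-\delta')\cdot 4\cdot\frac{1}{1+C\varepsilon}\int_\Gamma(k_1-k_2)^2 .
\]
Choosing $\delta'$ and $\varepsilon$ small in terms of $\delta$ makes the constant at least $4-\delta$.

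The main obstacle I expect is the remainder control in the second step. The bound from Proposition~\ref{444} is quadratic in $v$ and involves only up to two derivatives. To turn it into $C\varepsilon^2\mathscr R(v)$, I must check that each of the terms $|v||\bar\nabla^2v|$, $|\bar\nabla v|^2$ and $v^2$ is pointwise at most $\varepsilon$ times a linear quantity controlled in $L^2$ by Proposition~\ref{prop:tracefree_hessian_gap}. This is where the $C^1$ and $C^2$ smallness of $v$ and the absence of low modes are used. The sharpness at $l=2$ leaves no room in the spectral step, so every error must be genuinely $o(1)$ relative to the leading terms. This holds because both sides are compared with quadratic forms that vanish only on $\mathcal H_0\oplus\mathcal H_1$.
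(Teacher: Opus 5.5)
Your proposal is correct and follows the paper's proof essentially step for step. You use Proposition~\ref{555} for the gradient term. You use Proposition~\ref{444} with the trace-free Hessian gap of Proposition~\ref{prop:tracefree_hessian_gap}, the metric comparison, and Young's inequality with parameter $\varepsilon$ to get $\int_\Gamma(k_{1,\Gamma}-k_{2,\Gamma})^2\,d\mu_\Gamma\le 2(1+C\varepsilon)\mathscr R(v)$. You then use the mode-wise bound $\lambda_l-2\ge4$, i.e.\ $\mathscr Q(v)\ge 8\mathscr R(v)$. Your choice of constants (fix $\delta'$ first, then shrink $\varepsilon$) is, if anything, slightly cleaner than the paper's simultaneous choice of $\eta$ and $\varepsilon(\bar g,\eta)$.
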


\begin{proof}
If $\delta\ge4$, the conclusion is immediate, so we assume that $0<\delta<4$.

By the preceding definition, we write
\[
\mathscr Q(v)
:=
\int_{S^2}|\bar\nabla(\bar\Delta+2)v|_{\bar g}^2\,d\omega,
\qquad
\mathscr R(v)
:=
\int_{S^2}|(\bar\nabla^2v)^{\circ_{\bar g}}|_{\bar g}^2\,d\omega.
\]
Since $v$ contains only spherical harmonic modes of degree $l\ge2$, Proposition~\ref{lem:QR-diagonal} gives
\[
\mathscr Q(v)
=
\sum_{l\ge2,m}
\lambda_l(\lambda_l-2)^2|v_{l,m}|^2
\]
and
\[
\mathscr R(v)
=
\sum_{l\ge2,m}\frac12
\lambda_l(\lambda_l-2)|v_{l,m}|^2.
\]
Because $\lambda_l=l(l+1)\ge6$ for $l\ge2$,
\begin{equation}
\label{eq:l-ge-2-spectral-gap}
\mathscr Q(v)\ge8\mathscr R(v).
\end{equation}

Let $\eta\in(0,1)$ be chosen later. By Proposition~\ref{555}, there exists $\varepsilon=\varepsilon(\bar g,\eta)\in(0,\frac14)$ such that whenever $\|v\|_{C^3(S^2)}\le \varepsilon{(\bar g,\eta)}$, it holds that
\begin{equation}
\label{eq:gradH-from-E}
\int_\Gamma
|\nabla_\Gamma H_\Gamma|_{g_\Gamma}^2\,d\mu_\Gamma
\ge
(1-\eta)\mathscr Q(v).
\end{equation}

On the other hand, Proposition~\ref{444} gives that \( \hat A_\Gamma^{\circ_{\hat g}} = -(\bar\nabla^2v)^{\circ_{\bar g}}+\mathcal R, \) with
\[
|\mathcal R|_{\bar g}
\le
C
\left(
|v|\,|\bar\nabla^2v|_{\bar g}
+
|\bar\nabla v|_{\bar g}^2
+
v^2
\right).
\]
Then for $\|v\|_{C^3(S^2)}\le\varepsilon(\bar g,\eta)$, using Proposition~\ref{prop:tracefree_hessian_gap}, we have
\begin{equation}
\label{trace-free.2-form.estimate}
\int_{S^2}
|\mathcal R|_{\bar g}^2\,d\omega
\le
C\varepsilon(\bar g,\eta)^2
\sum_{k=0}^{2}
\|\bar\nabla^kv\|_{L^2(S^2)}^2
\le
C\varepsilon(\bar g,\eta)^2\mathscr R(v).
\end{equation}
Moreover, Lemma~\ref{lem:pullback_L2_comparison} implies that
\begin{equation}
\label{2-tensor.comparision}
|T|_{g_\Gamma}^2\,d\mu_\Gamma
\le
(1+C\varepsilon(\bar g,\eta))
|T|_{\bar g}^2\,d\omega
\end{equation}
for every covariant $2$-tensor $T$.

Using \( \hat A_\Gamma^{\circ_{\hat g}} = -(\bar\nabla^2v)^{\circ_{\bar g}}+\mathcal R \) and Young's inequality
\[
|X+Y|^2
\le
(1+\varepsilon(\bar g,\eta))|X|^2
+
C\varepsilon(\bar g,\eta)^{-1}|Y|^2,
\]
together with \eqref{trace-free.2-form.estimate} and \eqref{2-tensor.comparision} we therefore obtain
\[
\int_\Gamma
|A_\Gamma^\circ|_{g_\Gamma}^2\,d\mu_\Gamma
\le
(1+C\varepsilon(\bar g,\eta))\mathscr R(v).
\]
Since \( (k_{1,\Gamma}-k_{2,\Gamma})^2 = 2|A_\Gamma^\circ|_{g_\Gamma}^2, \) we have
\begin{equation}
\label{eq:k1k2-upper-by-hessian}
\int_\Gamma
(k_{1,\Gamma}-k_{2,\Gamma})^2\,d\mu_\Gamma
\le
2(1+C\varepsilon(\bar g,\eta))\mathscr R(v).
\end{equation}
Combining \eqref{eq:l-ge-2-spectral-gap}, \eqref{eq:gradH-from-E}, and \eqref{eq:k1k2-upper-by-hessian}, we find
\[
\int_\Gamma
|\nabla_\Gamma H_\Gamma|_{g_\Gamma}^2\,d\mu_\Gamma
\ge
\frac{4(1-\eta)}
{1+C\varepsilon(\bar g,\eta)}
\int_\Gamma
(k_{1,\Gamma}-k_{2,\Gamma})^2\,d\mu_\Gamma.
\]

Let \(\eta > 0\) be sufficiently small so that
\begin{align*}
    4\eta + (4 - \delta)C\varepsilon(\bar{g}, \eta) \leq \delta.
\end{align*}
This is possible since \(\varepsilon(\bar{g}, \eta) \to 0\) as \(\eta \to 0\). Since $\eta$ depends on $\delta$, so does $\varepsilon(\bar g,\eta)$. Consequently, for $\|v\|_{C^3(S^2)}\le\varepsilon(\bar g,\delta)$, it holds that
\[
\int_\Gamma
|\nabla_\Gamma H_\Gamma|_{g_\Gamma}^2\,d\mu_\Gamma
\ge
(4 - \delta)
\int_\Gamma
(k_{1,\Gamma}-k_{2,\Gamma})^2\,d\mu_\Gamma,
\]
which is exactly \eqref{key.inequality}.
\end{proof}

\begin{lemma}[Transformation of the coercive inequality under translation and scaling]
\label{lem:coercive-inequality-under-rigid-scaling}
Let $\Sigma\subset\mathbb R^3$ be a smooth closed strictly convex surface and, for $b\in\mathbb R^3$ and $\lambda>0$, set \( \Sigma^{b,\lambda}:=\lambda(\Sigma-b). \) We denote the induced metric, Levi--Civita connection, mean curvature, principal curvatures, and area element of \(\Sigma^{b,\lambda}\) by
\[
g^{b,\lambda},
\qquad
\nabla^{b,\lambda},
\qquad
H^{b,\lambda},
\qquad
k_i^{b,\lambda},
\qquad
d\mu^{b,\lambda},
\]
respectively. Then translation leaves both \( \int_\Sigma|\nabla H|_g^2\,d\mu \,\text{and}\, \int_\Sigma(k_1-k_2)^2\,d\mu \) unchanged, while under dilation by $\lambda$,
\begin{equation}
\label{eq:coercive-scaling-laws}
\int_{\Sigma^{b,\lambda}}
|\nabla^{b,\lambda}H^{b,\lambda}|_{g^{b,\lambda}}^2\,d\mu^{b,\lambda}
=
\lambda^{-2}
\int_\Sigma|\nabla H|_g^2\,d\mu,
\end{equation}
and
\begin{equation}
\label{eq:curvature-difference-scale-invariant}
\int_{\Sigma^{b,\lambda}}
(k_1^{b,\lambda}-k_2^{b,\lambda})^2\,d\mu^{b,\lambda}
=
\int_\Sigma(k_1-k_2)^2\,d\mu.
\end{equation}
\end{lemma}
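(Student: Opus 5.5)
The plan is to parametrize $\Sigma^{b,\lambda}$ over $\Sigma$ itself and track how each geometric quantity transforms. Concretely, if $\mathbf F:\Sigma_0\to\mathbb R^3$ parametrizes $\Sigma$, then $\mathbf F^{b,\lambda}:=\lambda(\mathbf F-b)$ parametrizes $\Sigma^{b,\lambda}$. Translation is an ambient isometry, and dilation is a homothety, so we can compute pointwise in the same local coordinates $x=(x^1,x^2)$ and compare directly.

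\textbf{Step 1 (translation).} We have $\partial_i\mathbf F^{b,1}=\partial_i\mathbf F$ and $\partial_i\partial_j\mathbf F^{b,1}=\partial_i\partial_j\mathbf F$, and the unit normal is unchanged. Hence $g_{ij}$, $h_{ij}$, $H$, $k_1,k_2$, $\nabla$ and $d\mu$ all coincide with those of $\Sigma$ in these coordinates. Both integrands and both measures are therefore identical, and the two integrals are invariant. The dilation may then be applied afterwards, since $\lambda(\Sigma-b)$ is the dilation of the translate $\Sigma-b$.

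\textbf{Step 2 (dilation, pointwise laws).} Set $\mathbf F^\lambda=\lambda\mathbf F$. Then:
\begin{itemize}
\item $\partial_i\mathbf F^\lambda=\lambda\partial_i\mathbf F$, so $g^\lambda_{ij}=\lambda^2 g_{ij}$, $(g^\lambda)^{ij}=\lambda^{-2}g^{ij}$, and $d\mu^\lambda=\lambda^2\,d\mu$.
\item The normal is unchanged, $\nu^\lambda=\nu$, because the tangent planes are the same.
\item $h^\lambda_{ij}=-(\nu,\lambda\partial_i\partial_j\mathbf F)=\lambda h_{ij}$.
\item Consequently the Weingarten map $(g^\lambda)^{ik}h^\lambda_{kj}=\lambda^{-1}g^{ik}h_{kj}$, so $k_i^\lambda=\lambda^{-1}k_i$ and $H^\lambda=\lambda^{-1}H$ at corresponding points.
\end{itemize}

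\textbf{Step 3 (integrate).} For the gradient term, note that $\partial_j H^\lambda=\lambda^{-1}\partial_jH$, which gives
\[
|\nabla^\lambda H^\lambda|^2_{g^\lambda}=(g^\lambda)^{ij}\partial_iH^\lambda\partial_jH^\lambda=\lambda^{-4}|\nabla H|_g^2 .
\]
Multiplying by $d\mu^\lambda=\lambda^2d\mu$ yields \eqref{eq:coercive-scaling-laws}. For the other term, $(k_1^\lambda-k_2^\lambda)^2\,d\mu^\lambda=\lambda^{-2}(k_1-k_2)^2\cdot\lambda^2\,d\mu$, which gives \eqref{eq:curvature-difference-scale-invariant}. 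Since $\lambda^{-2}$ is a positive constant, it follows in particular that the inequality \eqref{key.inequality} transforms with exactly this factor. That is the form needed when the estimate is transferred via Proposition~\ref{lem:gauge_fix_l01_strict}.

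There is no serious obstacle here. The only point that needs care is sign and orientation. We need $\nu^\lambda=\nu$ for $\lambda>0$, so that $k_i$ are not reordered or negated. Even if they were, the quantities $(k_1-k_2)^2$ and $|\nabla H|^2$ are insensitive to such sign flips.
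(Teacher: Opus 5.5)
Your proposal is correct and is exactly the direct calculation the paper defers to (citing Huisken's scaling computations). Under $\Sigma\mapsto\lambda(\Sigma-b)$, translation acts trivially, while dilation gives $g\mapsto\lambda^2 g$, $h\mapsto\lambda h$, $H,k_i\mapsto\lambda^{-1}H,\lambda^{-1}k_i$, and $d\mu\mapsto\lambda^2\,d\mu$. This yields both \eqref{eq:coercive-scaling-laws} and \eqref{eq:curvature-difference-scale-invariant}, together with the factor $\lambda^2$ used when transferring \eqref{key.inequality} back to $\tilde\Sigma_{\tilde t}$ in Section~6.
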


\begin{proof}
All of the above follows from a direct calculation, as for example in \cite[pp.~260]{huisken1984}.
\end{proof}

All ingredients are now in place: asymptotic radial-graph control, removal of the low spherical harmonics, the \(l\ge2\) coercive estimate, and the relevant scaling laws. We now combine them with the Willmore evolution identity.

\section{Proof of the Main Theorem}
We now prove Theorem~\ref{exciting!}.

\begin{proof}[Proof of Theorem~\ref{exciting!}]
For Huisken's area-preserving normalized mean curvature flow of surfaces in $\mathbb R^3$, the normalized immersion satisfies, up to tangential reparametrization,
\[
\partial_{\tilde t}\mathbf{\tilde F}
=
-\tilde H\tilde\nu
+
\left(
\frac{1}{2|\tilde\Sigma_{\tilde t}|}
\int_{\tilde\Sigma_{\tilde t}}
\tilde H^2\,d\tilde\mu
\right)\mathbf{\tilde F}.
\]
The second term is the infinitesimal generator of a time-dependent ambient dilation. Since, for surfaces in $\mathbb R^3$, the Willmore energy is invariant under translations, reparametrizations, and constant ambient dilations, the dilation term $\psi(t)$ does not contribute to the first variation of the Willmore energy. Hence the evolution of $\tilde W$ is the same as for the unnormalized mean curvature flow, namely
\begin{equation}
\label{eq:normalized-W-evolution-ineq}
\frac{d}{d\tilde t}\tilde W(\tilde t)
=
-2
\int_{\tilde\Sigma_{\tilde t}}
|\tilde\nabla\tilde H|_{\tilde g}^2\,d\tilde\mu
+
\int_{\tilde\Sigma_{\tilde t}}
(\tilde k_1-\tilde k_2)^2
\tilde H^2\,d\tilde\mu.
\end{equation}

Moreover, \( \tilde H=\tilde k_1+\tilde k_2, \, \tilde K=\tilde k_1\tilde k_2, \) so \( \tilde H^2 = (\tilde k_1-\tilde k_2)^2+4\tilde K. \) Since $\tilde\Sigma_{\tilde t}$ is strictly convex, it is topologically a sphere. Hence \( \chi(\tilde\Sigma_{\tilde t})=2, \) and the Gauss--Bonnet theorem gives \( \int_{\tilde\Sigma_{\tilde t}} \tilde K\,d\tilde\mu = 4\pi. \) Consequently,
\begin{equation}
\label{eq:W-minus-16pi-defect}
\tilde W(\tilde t)-16\pi
=
\int_{\tilde\Sigma_{\tilde t}}
(\tilde k_1-\tilde k_2)^2\,d\tilde\mu.
\end{equation}

For the fixed value of $\delta:=\frac{1}{16}$, let
\[
\varepsilon_{\mathrm c}
:=
\varepsilon(\bar g,\delta)
=
\varepsilon\left(\bar g,\frac1{16}\right)
>0
\]
be the smallness constant given by Lemma~\ref{lem:l-ge-2-coercivity}.

Let \( \varepsilon_0>0 \) and \( C_{\mathrm{gf}}>0 \) be the constants in Proposition~\ref{lem:gauge_fix_l01_strict}. Define
\begin{equation}
\label{eq:main-epsilon-star}
\varepsilon_*
:=
\frac12
\min\left\{
\varepsilon_0,
\frac{\varepsilon_{\mathrm c}}{C_{\mathrm{gf}}}
\right\}.
\end{equation}
Thus $\varepsilon_*$ depends only on the fixed constants \( \bar g,\, \delta=\frac1{16},\, \varepsilon_0,\, C_{\mathrm{gf}}, \) and in particular is independent of time.

By the smooth convergence of Huisken's normalized flow to the limiting round sphere, for all sufficiently large $\tilde t$ the surface $\tilde\Sigma_{\tilde t}$ can be written as a normal graph over $S_R^2$; see \cite[p.~328]{MazzeoPacard2011}. Hence there exists a constant
\[
\tilde T_{\mathrm{graph}}
=
\tilde T_{\mathrm{graph}}
(\Sigma_0,\varepsilon_*)
<\infty
\]
such that, for every $\tilde t\ge\tilde T_{\mathrm{graph}}$,
\[
\tilde\Sigma_{\tilde t}
=
\{R(1+\tilde u(\omega,\tilde t))\omega:\omega\in S^2\},
\]
where
\begin{equation}
\label{eq:main-u-small}
\|\tilde u(\cdot,\tilde t)\|_{C^3(S^2)}
\le
\varepsilon_*.
\end{equation}

The graph function $\tilde u$ need not be exactly orthogonal to the $l=0$ and $l=1$ spherical harmonics. We therefore first rescale to the unit-sphere scale by setting \( \widehat\Sigma_{\tilde t} := R^{-1}\tilde\Sigma_{\tilde t}. \) Then \( \widehat\Sigma_{\tilde t} = \{(1+\tilde u(\omega,\tilde t))\omega:\omega\in S^2\}. \)

Since \( \|\tilde u(\cdot,\tilde t)\|_{C^3(S^2)} \le \varepsilon_* < \varepsilon_0, \) we  can apply Proposition~\ref{lem:gauge_fix_l01_strict} to $\widehat\Sigma_{\tilde t}, \tilde t\ge\tilde T_{\mathrm{graph}}$. Hence, for $\tilde t\ge\tilde T_{\mathrm{graph}}$, there exist \( a(\tilde t)\in\mathbb R^3, \, \rho(\tilde t)>0, \) such that
\[
\widehat\Sigma_{\tilde t}^{a,\rho}
:=
\rho(\tilde t)
\left(
\widehat\Sigma_{\tilde t}-a(\tilde t)
\right)
\]
can be written as
\[
\widehat\Sigma_{\tilde t}^{a,\rho}
=
\{(1+\check u(\omega,\tilde t))\omega:\omega\in S^2\},
\]
where
\[
\int_{S^2}\check u(\omega)\,d\omega=0,
\qquad
\int_{S^2}\check u(\omega)\omega\,d\omega=0.
\]
Equivalently, \( \check u \perp_{L^2(S^2)} (\mathcal H_0\oplus\mathcal H_1). \)

Moreover,
\[
|a(\tilde t)|
+
|\rho(\tilde t)-1|
\le
C_{\mathrm{gf}}
\|\tilde u(\cdot,\tilde t)\|_{C^1(S^2)},
\]
and
\[
\|\check u(\cdot,\tilde t)\|_{C^3(S^2)}
\le
C_{\mathrm{gf}}
\|\tilde u(\cdot,\tilde t)\|_{C^3(S^2)}.
\]
By \eqref{eq:main-u-small} and \eqref{eq:main-epsilon-star},
\[
\|\check u(\cdot,\tilde t)\|_{C^3(S^2)}
\le
C_{\mathrm{gf}}\varepsilon_*
\le
\frac12\varepsilon_{\mathrm c}
<
\varepsilon_{\mathrm c}
=
\varepsilon(\bar g,\delta).
\]
Therefore all the hypotheses of Lemma~\ref{lem:l-ge-2-coercivity} are satisfied for $\widehat\Sigma_{\tilde t}^{a,\rho}$ for every $\tilde t\ge\tilde T_{\mathrm{graph}}$. Hence, for $\tilde t\ge\tilde T_{\mathrm{graph}}$, we obtain
\begin{equation}
\label{eq:main-gauged-coercivity}
\int_{\widehat\Sigma_{\tilde t}^{a,\rho}}
|\nabla^{a,\rho}H^{a,\rho}|_{g^{a,\rho}}^2\,d\mu^{a,\rho}
\ge
(4-\delta)
\int_{\widehat\Sigma_{\tilde t}^{a,\rho}}
(k_1^{a,\rho}-k_2^{a,\rho})^2\,d\mu^{a,\rho}.
\end{equation}

We now return to the original normalized surfaces $\tilde \Sigma_{\tilde t}$. Notice that
\[
\widehat\Sigma_{\tilde t}^{a,\rho}
=
\rho(\tilde t)
\left(
R^{-1}\tilde\Sigma_{\tilde t}-a(\tilde t)
\right)
=
\frac{\rho(\tilde t)}{R}
\left(
\tilde\Sigma_{\tilde t}
-
R\,a(\tilde t)
\right).
\]
Define \( b(\tilde t) := Ra(\tilde t), \, \lambda(\tilde t) := \frac{\rho(\tilde t)}{R}. \) Then
\[
\widehat\Sigma_{\tilde t}^{a,\rho}
=
\lambda(\tilde t)
\left(
\tilde\Sigma_{\tilde t}
-
b(\tilde t)
\right).
\]
Moreover,
\[
\frac{|b(\tilde t)|}{R}
+
|R\lambda(\tilde t)-1|
\le
C_{\mathrm{gf}}
\|\tilde u(\cdot,\tilde t)\|_{C^1(S^2)}.
\]
Since
\[
\|\tilde u(\cdot,\tilde t)\|_{C^1(S^2)}
\longrightarrow0
\qquad
\text{as }\tilde t\to\infty,
\]
we obtain
\[
b(\tilde t)\to0,
\qquad
\lambda(\tilde t)\to\frac1R \qquad
\text{as }\tilde t\to\infty.
\]

By Lemma~\ref{lem:coercive-inequality-under-rigid-scaling}, \eqref{eq:main-gauged-coercivity} implies
\[
\int_{\tilde\Sigma_{\tilde t}}
|\tilde\nabla\tilde H|_{\tilde g}^2\,d\tilde\mu
\ge
\lambda(\tilde t)^2(4-\delta)
\int_{\tilde\Sigma_{\tilde t}}
(\tilde k_1-\tilde k_2)^2\,d\tilde\mu
\]
for every $\tilde t\ge\tilde T_{\mathrm{graph}}$.

Set \( \sigma:=\frac1{4R^2}. \) Since \( \lambda(\tilde t)\to\frac1R, \) there exists \( \tilde T_{\lambda} = \tilde T_{\lambda} (\Sigma_0,\varepsilon_*,R) \) satisfying \( \tilde T_{\mathrm{graph}} \le \tilde T_{\lambda} < \infty, \) such that for every $\tilde t\ge\tilde T_{\lambda}$,
\[
\left|
\lambda(\tilde t)^2-\frac1{R^2}
\right|
(4-\delta)
<
\frac{\sigma}{2}.
\]

For every \( \tilde t\ge\tilde T_{\lambda}, \) we therefore have
\[
\lambda(\tilde t)^2(4-\delta)
\ge
\frac{4-\delta}{R^2}
-
\frac{\sigma}{2}.
\]
Using
\[
\frac{\delta}{R^2}
<
\frac{\sigma}{2},
\]
we obtain
\[
\lambda(\tilde t)^2(4-\delta)
>
\frac4{R^2}-\sigma.
\]
Consequently,
\[
\int_{\tilde\Sigma_{\tilde t}}
|\tilde\nabla\tilde H|_{\tilde g}^2\,d\tilde\mu
\ge
\left(
\frac4{R^2}-\sigma
\right)
\int_{\tilde\Sigma_{\tilde t}}
(\tilde k_1-\tilde k_2)^2\,d\tilde\mu.
\]
Using \eqref{eq:W-minus-16pi-defect}, this becomes
\begin{equation}
\label{eq:coercive-W-defect}
\int_{\tilde\Sigma_{\tilde t}}
|\tilde\nabla\tilde H|_{\tilde g}^2\,d\tilde\mu
\ge
\left(
\frac4{R^2}-\sigma
\right)
\left(
\tilde W(\tilde t)-16\pi
\right).
\end{equation}

Since the normalized flow converges smoothly to the round sphere of radius $R$, we have \( \tilde H\to\frac2R \, \text{uniformly as }\tilde t\to\infty. \) For the fixed value \( \eta:=\frac1{4R^2}, \) there exists \( \tilde T_H = \tilde T_H(\Sigma_0,R) <\infty \) such that
\begin{equation}
\label{eq:main-H-upper}
\tilde H^2
\le
\frac4{R^2}+\eta
\end{equation}
on $\tilde\Sigma_{\tilde t}$ for every $\tilde t\ge\tilde T_H$.

Finally, set
\[
\tilde T_0
:=
\max\left\{
\tilde T_{\lambda},
\tilde T_H
\right\}.
\]
Thus, before using the fixed choices above, its dependence may be displayed as \( \tilde T_0 = \tilde T_0 \left( \Sigma_0, R, \varepsilon_* \right). \) The limiting radius $R$ is determined by the initial data $\Sigma_0$. Therefore the final time may ultimately be written simply as \( \tilde T_0=\tilde T_0(\Sigma_0). \)

For every $\tilde t\ge\tilde T_0$, both \eqref{eq:coercive-W-defect} and \eqref{eq:main-H-upper} hold. Combining them with \eqref{eq:normalized-W-evolution-ineq}, we obtain
\begin{align*}
\frac{d}{d\tilde t}\tilde W(\tilde t)
&\le
-2\left(
\frac4{R^2}-\sigma
\right)
\left(
\tilde W(\tilde t)-16\pi
\right)
+
\left(
\frac4{R^2}+\eta
\right)
\left(
\tilde W(\tilde t)-16\pi
\right)
\\
&=
\left(
-\frac4{R^2}
+
2\sigma
+
\eta
\right)
\left(
\tilde W(\tilde t)-16\pi
\right)
\\
&=-\frac{13}{4R^2}
\left(
\tilde W(\tilde t)-16\pi
\right).
\end{align*}

If $\tilde\Sigma_{\tilde t}$ is not a round sphere, then \( \tilde W(\tilde t)-16\pi>0, \) and hence \( \frac{d}{d\tilde t}\tilde W(\tilde t)<0 \) for every $\tilde t\ge\tilde T_0$.

If $\tilde\Sigma_{\tilde t}$ is a round sphere, then \( \tilde W(\tilde t)=16\pi, \) and therefore \( \frac{d}{d\tilde t}\tilde W(\tilde t)=0. \) Consequently,
\[
\frac{d}{d\tilde t}\tilde W(\tilde t)\le0
\qquad
\text{for all }\tilde t\ge\tilde T_0,
\]
with equality only when $\tilde\Sigma_{\tilde t}$ is a round sphere. This proves part~\textup{(ii)}.

We finally return to the unnormalized mean curvature flow. Since
\[
\tilde F(\cdot,\tilde t)
=
\psi(t)F(\cdot,t), \qquad \tilde t(t)
=
\int_0^t\psi^2(\tau)\,d\tau,
\]
where $\psi(t)>0$ is the scaling factor in Huisken's area-preserving normalization. Hence \( \frac{d\tilde t}{dt} = \psi^2(t)>0, \) so $t\mapsto\tilde t(t)$ is strictly increasing.

Since \( \tilde t(t)\to\infty \, \text{as }t\to T, \) and $t\mapsto\tilde t(t)$ is strictly increasing, there exists a unique \( t_0 = t_0(\Sigma_0,\tilde T_0) \in(0,T) \) such that \( \tilde t(t_0)=\tilde T_0. \) Since \( \tilde T_0=\tilde T_0(\Sigma_0), \) the time $t_0$ ultimately depends only on $\Sigma_0$.

For every $t\ge t_0$, \( \tilde t(t)\ge\tilde T_0. \) Differentiating \( W(t)=\tilde W(\tilde t(t)) \) with respect to $t$, we obtain
\[
\frac{d}{dt}W(t)
=
\frac{d}{d\tilde t}\tilde W(\tilde t(t))
\frac{d\tilde t}{dt}
=
\psi^2(t)
\frac{d}{d\tilde t}\tilde W(\tilde t(t)).
\]
Since \( \psi(t)>0, \) the two derivatives have the same sign. Therefore, for every $t\in[t_0,T)$, \( \frac{d}{dt}W(t)\le0. \) Moreover, equality at such a time occurs only when $\tilde\Sigma_{\tilde t(t)}$, and hence $\Sigma_t$, is a round sphere.

This proves part~\textup{(i)} and completes the proof.
\end{proof}

\section{Space Form Examples and Hawking-Mass Consequences}
The purpose of this section is twofold. First, the totally umbilical models show explicitly how the sign of the Willmore evolution changes with the ambient sectional curvature. Second, the monotonicity of Willmore energy provides the Hawking-mass monotonicity in the Euclidean case.

Throughout this section, the ambient manifold $\breve{M} $ and the initial hypersurface $\Sigma_0$ are assumed to satisfy the hypotheses of Huisken's convergence theorem \cite[Theorem~1.1]{Huisken1986Contracting}.

We write \(\breve R\) and \(\breve{\operatorname{Ric}}\) for the Riemann curvature tensor and Ricci tensor of the ambient manifold, respectively. Unless otherwise specified, \(g\), \(\nabla\), \(\Delta\), \(A\), \(H\), \(\nu\), and \(d\mu\) denote the metric, Levi--Civita connection, Laplace--Beltrami operator, second fundamental form, mean curvature, unit normal, and area element induced on \(\Sigma_t\), respectively. For an \(n\)-dimensional hypersurface, we write \( A^\circ:=A-\frac{H}{n}g \) for the trace--free second fundamental form. In dimension two, \(k_1\) and \(k_2\) denote the principal curvatures and \(K\) denotes the Gaussian curvature of \(\Sigma_t\).

\begin{lemma}[Preservation of total umbilicity in a space form]
\label{lem:umbilicity-preserved-space-form-huisken}
Let \( F:\Sigma_0\times[0,T)\longrightarrow(\breve M^{n+1},\breve g) \) be a smooth mean curvature flow satisfying \( \partial_tF=-H\nu \). Define \( \Sigma_t:=F(\Sigma_0,t) \). Assume that \((\breve M^{n+1},\breve g)\) has constant sectional curvature \(c\). If
\[
A^\circ(\cdot,0)=0,
\]
then
\[
A^\circ(\cdot,t)\equiv0
\]
for as long as the smooth solution exists.
\end{lemma}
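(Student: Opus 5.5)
The plan is to derive a closed parabolic evolution equation for $|A^\circ|^2$ that has no inhomogeneous source term whenever the ambient space has constant curvature, and then to use the maximum principle, or uniqueness for this linear-type inequality, to keep $A^\circ\equiv0$.

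\textbf{Step 1: evolution of $h_{ij}$.} For mean curvature flow in a space form, Huisken's computation in \cite{Huisken1986Contracting} gives
\[
\partial_t h_{ij}=\Delta h_{ij}-2Hh_{il}h^l{}_j+|A|^2h_{ij}+c\bigl(2nh_{ij}-2Hg_{ij}\bigr)\cdot\tfrac12\cdot 2 ,
\]
up to normalization of the constant-curvature terms. The form I expect is $\partial_t h_{ij}=\Delta h_{ij}-2Hh_{il}h^l{}_j+|A|^2h_{ij}+2nc\,h_{ij}-2cHg_{ij}$. I would derive it from Simons' identity together with Codazzi. In a space form, Codazzi has no ambient curvature correction, because $\breve R(\cdot,\cdot,\cdot,\nu)=0$. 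The ambient curvature therefore enters only through terms linear in $h$ and $g$.

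\textbf{Step 2: evolution of $A^\circ$ and of $|A^\circ|^2$.} I would use $\partial_t g_{ij}=-2Hh_{ij}$ and $\partial_t H=\Delta H+H(|A|^2+nc)$, and set $A^\circ=A-\frac Hn g$. The pure-trace pieces $-2cHg_{ij}$ and $\frac1n(\ldots)g_{ij}$ will cancel against the corresponding parts of $\partial_t(\frac Hng_{ij})$.

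Next I substitute $A=A^\circ+\frac Hng$ into the quadratic term $Hh_{il}h^l{}_j$. This produces terms of the form $H A^\circ * A^\circ$ plus multiples of $g_{ij}$ and of $A^\circ_{ij}$. The goal is to show that every term in $(\partial_t-\Delta)A^\circ_{ij}$ is linear in $A^\circ$ with smooth coefficients, schematically
\[
(\partial_t-\Delta)A^\circ=A^\circ*(A+g).
\]
From this I would get
\[
(\partial_t-\Delta)|A^\circ|^2\le -2|\nabla A^\circ|^2+C(t)|A^\circ|^2,
\]
where $C(t)$ bounds $|A|^2+|c|$ on $[0,t']$ for any $t'<T$. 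This bound is finite by smoothness and compactness.

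\textbf{Step 3: maximum principle.} Set $f=e^{-C t}|A^\circ|^2$. Then $(\partial_t-\Delta)f\le0$ and $f(\cdot,0)=0$. By the weak maximum principle on the closed manifold $\Sigma_0$, $f\le0$, and since $f\ge0$ we get $A^\circ\equiv0$ on $[0,t']$ for every $t'<T$.

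\textbf{Main obstacle.} The delicate step is Step~2. One must verify that the zeroth-order terms contain no contribution independent of $A^\circ$. Such a term would arise from the ambient curvature, either through $\breve R$-terms in Simons' identity or through Codazzi corrections. This is exactly where constant sectional curvature is used: the curvature terms reduce to $c(nh_{ij}-Hg_{ij})=nc\,A^\circ_{ij}$. I would check this by contracting explicitly with $\breve R_{ijkl}=c(\breve g_{ik}\breve g_{jl}-\breve g_{il}\breve g_{jk})$.
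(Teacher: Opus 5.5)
Your proposal is correct and is essentially the paper's argument. The paper quotes Huisken's evolution $\partial_tQ=\Delta Q-2\bigl(|\nabla A|^2-\frac1n|\nabla H|^2\bigr)+2(|A|^2-nc)Q$ for $Q=|A^\circ|^2$ and applies the maximum principle; this is the same linear differential inequality you obtain by going through the tensor equation for $A^\circ$.

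One correction for when you do the contraction. The curvature terms in $\partial_th_{ij}$ are $-nc\,h_{ij}+2cH\,g_{ij}$, not your guess $+2nc\,h_{ij}-2cHg_{ij}$, which is inconsistent with your own formula for $\partial_tH$. This gives $-nc\,A^\circ_{ij}$ rather than $+nc\,A^\circ_{ij}$ in the $A^\circ$ equation. The error is harmless, since only linearity in $A^\circ$ with bounded coefficients is used.
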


\begin{proof}
Set \( Q:=|A|^2-\frac1nH^2=|A^\circ|^2. \) Huisken's evolution equation for \(Q\) in an ambient Riemannian manifold \cite{Huisken1986Contracting}, together with the space form identity
\[
\breve R_{\alpha\beta\gamma\delta}
=
c\left(
\breve g_{\alpha\gamma}\breve g_{\beta\delta}
-
\breve g_{\alpha\delta}\breve g_{\beta\gamma}
\right)
\]
(see, for example, \cite[p.~121]{MikesStepanova2014}), gives
\[
\partial_tQ
=
\Delta Q
-2\left(
|\nabla A|^2-\frac1n|\nabla H|^2
\right)
+
2(|A|^2-nc)Q.
\]
Since \( A^\circ=A-\frac{H}{n}g \,\text{and}\, \nabla g=0, \) we have \( |\nabla A^\circ|^2 = |\nabla A|^2-\frac1n|\nabla H|^2. \) Hence
\[
\partial_tQ
=
\Delta Q
-2|\nabla A^\circ|^2
+
2(|A|^2-nc)Q
\le
\Delta Q+2(|A|^2-nc)Q.
\]
The coefficient $2(|A|^2-nc)$ on the right-hand side is bounded on every compact time interval on which the smooth flow exists. Since \(Q(\cdot,0)=0\), the parabolic maximum principle yields \( Q(\cdot,t)\equiv0. \) Thus \(A^\circ(\cdot,t)\equiv0\).
\end{proof}

Having established preservation of total umbilicity, the Willmore evolution simplifies drastically. We now compute it explicitly in dimension two.

\begin{example}[Willmore energy along totally umbilical mean curvature flow in a space form]
\label{lem:willmore-energy-umbilic-space-form}
Let \(\Sigma_t^2\subset(\breve M^3,\breve g)\) be a  closed connected smooth totally umbilical surface evolving by mean curvature flow in a three-dimensional space form of sectional curvature \(c\).  Writing \( W(t) := \int_{\Sigma_t}H^2\,d\mu. \) Then one has
\[
\frac{d}{dt}W(t)
=
4c\,W(t).
\]
Consequently, the Willmore energy is constant for \(c=0\), non-decreasing for \(c>0\), and non-increasing for \(c<0\).
\end{example}

\begin{proof}
By Lemma~\ref{lem:umbilicity-preserved-space-form-huisken}, \( A^\circ\equiv0, \, A=\frac{H}{2}g, \, |A|^2=\frac12H^2. \) Since \( \breve{\operatorname{Ric}}(\nu,\nu)=2c \) in a three-dimensional space form, the standard evolution equations \cite{Huisken1986Contracting} give
\[
\frac{d}{dt}\int_{\Sigma_t}H^2\,d\mu
=
-2\int_{\Sigma_t}|\nabla H|^2\,d\mu
+
4c\int_{\Sigma_t}H^2\,d\mu.
\]

It remains to show that \(H\) is constant on each connected component. Since the ambient manifold is a space form, the Codazzi equation reduces to \( \nabla_i h_{jk}=\nabla_j h_{ik}. \) Using \(h_{jk}=\frac12Hg_{jk}\), we obtain \( (\nabla_iH)g_{jk} = (\nabla_jH)g_{ik}. \) Contracting with \(g^{jk}\) gives \( 2\nabla_iH=\nabla_iH, \) and therefore \( \nabla H\equiv0. \) Hence
\[
\frac{d}{dt}\int_{\Sigma_t}H^2\,d\mu
=
4c\int_{\Sigma_t}H^2\,d\mu,
\]
which proves the claim.
\end{proof}

The same model also makes the behavior of the Hawking mass completely explicit. The following corollary records the corresponding evolution and its dependence on the sign of \(c\).

\begin{corollary}[Hawking mass along the totally umbilical flow]
\label{cor:hawking-mass-umbilic-space-form}
Let \(\Sigma_t^2\subset(\breve M^3,\breve g)\) be as in Example~\ref{lem:willmore-energy-umbilic-space-form}. Set \( \mathcal A(t):=|\Sigma_t|. \) Then for the Hawking mass
\[
m_H(\Sigma_t)
=
\sqrt{\frac{\mathcal A(t)}{16\pi}}
\left(
1-\frac{W(t)}{16\pi}
\right),
\]
it holds that
\[
\frac{d}{dt}m_H(\Sigma_t)
=
-\frac{3c\mathcal A(t)^{1/2}}{8\pi^{3/2}}
\left(
4\pi-c\mathcal A(t)
\right).
\]
Thus the Hawking mass is identically zero for \(c=0\), non-increasing for \(c>0\), and non-decreasing for \(c<0\).
\end{corollary}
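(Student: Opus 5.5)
The plan is to reduce everything to the area $\mathcal A(t)$, using total umbilicity together with the Gauss equation and Gauss--Bonnet, and then differentiate.

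\emph{Step 1: express $W$ through $\mathcal A$.} By Lemma~\ref{lem:umbilicity-preserved-space-form-huisken} we have $k_1=k_2=\frac H2$ for all $t$. The proof of Example~\ref{lem:willmore-energy-umbilic-space-form} shows that $H$ is spatially constant. The Gauss equation in a space form of curvature $c$ gives the intrinsic curvature
\[
K=c+k_1k_2=c+\tfrac14H^2 .
\]
I will justify that $\Sigma_t$ is a topological sphere, so that Gauss--Bonnet gives $\int_{\Sigma_t}K\,d\mu=4\pi$. Under the standing hypotheses of Huisken's theorem \cite{Huisken1986Contracting}, $\Sigma_t$ is convex and bounds a region, hence it is a sphere; alternatively, a closed connected totally umbilical surface in a space form is a geodesic sphere. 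Integrating then yields
\[
4\pi=c\,\mathcal A(t)+\tfrac14W(t),
\qquad\text{that is,}\qquad
W(t)=16\pi-4c\,\mathcal A(t).
\]
Consequently $1-\frac{W}{16\pi}=\frac{c\mathcal A}{4\pi}$, and the Hawking mass is explicit:
\[
m_H(\Sigma_t)=\frac{c\,\mathcal A(t)^{3/2}}{16\pi^{3/2}}.
\]
This already gives $m_H\equiv0$ when $c=0$.

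\emph{Step 2: differentiate.} Under mean curvature flow the first variation of area is $\mathcal A'(t)=-\int_{\Sigma_t}H^2\,d\mu=-W(t)$, and this holds in any ambient manifold. Therefore
\[
\frac{d}{dt}m_H=\frac{3c}{32\pi^{3/2}}\mathcal A^{1/2}\mathcal A'
=-\frac{3c}{32\pi^{3/2}}\mathcal A^{1/2}\cdot4\bigl(4\pi-c\mathcal A\bigr),
\]
which is the stated formula. As a consistency check, differentiating $W=16\pi-4c\mathcal A$ gives $W'=-4c\mathcal A'=4cW$, in agreement with Example~\ref{lem:willmore-energy-umbilic-space-form}.

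\emph{Step 3: signs.} Since $W(t)=\int H^2\,d\mu\ge0$, Step 1 gives $4\pi-c\mathcal A=\frac W4\ge0$. The sign of $\frac{d}{dt}m_H$ is therefore that of $-c$: non-increasing for $c>0$ and non-decreasing for $c<0$.

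The only step requiring care is the topological input in Step 1, namely that $\chi(\Sigma_t)=2$, so that Gauss--Bonnet applies with the value $4\pi$. I would cite Huisken's convexity hypotheses, or the classification of umbilic surfaces, to secure it. The rest is direct computation.
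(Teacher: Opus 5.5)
Your proposal is correct and follows the paper's proof essentially step for step. Total umbilicity, the Gauss equation and Gauss--Bonnet give $4\pi=c\mathcal A(t)+\frac14W(t)$, hence $m_H(\Sigma_t)=c\mathcal A(t)^{3/2}/(16\pi^{3/2})$, which is differentiated using $\mathcal A'(t)=-W(t)$ and the sign fact $4\pi-c\mathcal A(t)=\frac14W(t)\ge0$. Your explicit justification of $\chi(\Sigma_t)=2$ via the standing Huisken hypotheses is a worthwhile addition, since the paper leaves it implicit. Your alternative appeal to the classification of umbilic surfaces, however, needs a simply connected space form: flat $3$-tori and compact hyperbolic $3$-manifolds can contain closed totally umbilical surfaces of genus $\ge1$.
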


\begin{proof}
The area evolution and Example~\ref{lem:willmore-energy-umbilic-space-form} imply that \( \mathcal A'(t)=-W(t), \, \frac{d}{dt}W(t)=4cW(t). \)

Since it is totally umbilical, \( k_1=k_2=\frac{H}{2}. \) The Gauss equation gives \( K=c+k_1k_2 = c+\frac14H^2 \) (see, for example, \cite[p.~81]{MeeksPerezRos2008}). Integrating and applying Gauss--Bonnet theorem yields
\[
4\pi
=
c\mathcal A(t)+\frac14W(t).
\]
Therefore
\[
1-\frac{W(t)}{16\pi}
=
\frac{c\mathcal A(t)}{4\pi},
\]
and hence
\[
m_H(\Sigma_t)
=
\frac{c\mathcal A(t)^{3/2}}{16\pi^{3/2}}.
\]
Differentiating and using
\[
\mathcal A'(t)
=
-W(t)
=
-4\left(4\pi-c\mathcal A(t)\right)
\]
gives
\[
\frac{d}{dt}m_H(\Sigma_t)
=
-\frac{3c\mathcal A(t)^{1/2}}{8\pi^{3/2}}
\left(
4\pi-c\mathcal A(t)
\right).
\]
Since \( 4\pi-c\mathcal A(t)=\frac14W(t)\ge0\), the corollary is proved.
\end{proof}

The computation of space forms highlights the role of ambient curvature. We now return to the Euclidean setting, where the main theorem supplies the missing sign in the Hawking-mass evolution.

\begin{proof}[Proof of Corollary~\ref{cor:hawking-mass-euclidean-late-time-intro}]
For the Hawking mass
\[
m_H(\Sigma_t)
=
\sqrt{\frac{\mathcal A(t)}{16\pi}}
\left(
1-\frac{W(t)}{16\pi}
\right),
\]
we have
\begin{equation}
\label{hawking.mass.derivative}
\frac{d}{dt}m_H(\Sigma_t)
=
\sqrt{\frac{\mathcal A(t)}{16\pi}}
\left[
-\frac{W(t)}{2\mathcal A(t)}
\left(
1-\frac{W(t)}{16\pi}
\right)
-
\frac{W'(t)}{16\pi}
\right].
\end{equation}
By the Willmore inequality, we have \( W(t)\ge16\pi, \) while Theorem~\ref{exciting!} gives \( \frac{d}{dt}W(t)\le0 \) for all sufficiently large \(t\). Thus both terms in the RHS of \eqref{hawking.mass.derivative} are nonnegative, and hence \( \frac{d}{dt}m_H(\Sigma_t)\ge0 \) in the same asymptotic regime.

If equality holds at such a time, then \( W(t)=16\pi, \) and therefore \(\Sigma_t\) is a round sphere by the equality case of the Willmore inequality.
\end{proof}

\bibliographystyle{plain}
\bibliography{main}

\end{document}